\DeclareSymbolFont{rsfs}{U}{rsfs}{m}{n}
\DeclareSymbolFontAlphabet{\mathscrn}{rsfs} \theoremstyle
{plain}
\newtheorem{theorem}{Theorem}
\newtheorem{lemma}{Lemma}
\newtheorem{remark}{Remark}
 \numberwithin{equation}{section}
\begin{document}
\title{\textbf{Eigenfunctions of the Cosine and Sine Transforms}}
\author{\textbf{Victor Katsnelson}}
\date{\ }%
 \maketitle
 \vspace*{-6.0ex}
 \begin{center}
 \textit{The Weizmann  Institute of Science\\
 Rehovot, 76100, Israel}
 \end{center}
 \begin{center}{\small
 E-mail: \texttt{victor.katsnelson@weizmann.ac.il; victorkatsnelson@gmail.com}
 }
 \end{center}
{\abstract\footnotesize{}A description of eigensubspaces of the cosine and sine operators is presented.
 The spectrum of each of these two operator consists of two eigenvalues \(1,\,-1\) and
 their eigensubspaces are infinite--dimensional. There are many possible bases for these subspaces, but most popular are bases constructed from the Hermite functions.
We present other "bases" which are not discrete orthogonal sequences of vectors, but continuous orthogonal chains of vectors.  Our work can be considered
 a continuation and further development of results in \textit{Self-reciprocal functions}  by Hardy and Titchmarsh:
 Quarterly Journ. of Math.
(Oxford Ser.) \textbf{1} (1930).
 }\\

 \small{\textit{Key words: Fourier transform, cosine- sine transforms, eigenfunctions,
  Melline transform.}}

  {\small\textit{Mathematical Subject Classification 2000:}  Primary 47A38; Secondary 47B35, 47B06, 47A10.}\normalsize
\vspace{3.0ex}

 \textbf{1.}\hspace{1.0ex} The cosine transform \(\boldsymbol{\mathscr{C}}\) and the sine transform
 \(\boldsymbol{\mathscr{S}}\) are defined by formulas
 \begin{subequations}
\label{CoSiTr}
\begin{align}
\label{CosTr}
(\boldsymbol{\mathscr{C}}x)(t)
=&{\textstyle\sqrt{\frac{2}{\pi}}}%
{\displaystyle\int\limits_{\mathbb{R_{+}}}\cos(t\xi)\,x(\xi)\,d\xi}, \quad t\in\mathbb{R}_{+}\,,\\
\label{SinTr}
(\boldsymbol{\mathscr{S}}x)(t)=
&{\textstyle\sqrt{\frac{2}{\pi}}}\int\limits_{\mathbb{R_{+}}}\sin(t\xi)\,x(\xi)\,d\xi,
\quad t\in\mathbb{R}_{+}\,,
\end{align}
\end{subequations}
where \(\mathbb{R}_{+}\) is the positive half-axis,
\(\mathbb{R}_{+}=\lbrace{}t\in\mathbb{R}:\,t>0\rbrace\).\\
For \(x\in{}L^1(\mathbb{R}_{+})\), the integrals in
\eqref{CoSiTr} are well defined as Lebesgue integrals. If
\(x(t)\in{}L^2(\mathbb{R}_{+})\cap{}L^1(\mathbb{R}_{+})\), then the Parseval equalities hold:
\begin{subequations}
\label{PaCS}
\begin{align}
\label{PaC}
\int\limits_{\mathbb{R}_{+}}|(\boldsymbol{\mathscr{C}}x)(t)|^2dt=
&\int\limits_{\mathbb{R}_{+}}|x(t)|^2dt,\\
\label{PaS}
\int\limits_{\mathbb{R}_{+}}|(\boldsymbol{\mathscr{S}}x)(t)|^2dt=
&\int\limits_{\mathbb{R}_{+}}|x(t)|^2dt.
\end{align}
\end{subequations}
Thus, the transforms \(\boldsymbol{\mathscr{C}}\) and  \(\boldsymbol{\mathscr{S}}\)
can both be considered as linear operators defined on the
linear manifold \(L^{1}(\mathbb{R}_{+})\cap{}L^{2}(\mathbb{R}_{+})\) of the Hilbert space \(L^{2}(\mathbb{R}_{+})\),
mapping this linear manifold into \(L^{2}(\mathbb{R}_{+})\) \emph{isometrically}.
Since the set \(L^{1}(\mathbb{R}_{+})\cap{}L^{2}(\mathbb{R}_{+})\) is dense in \(L^{2}(\mathbb{R}_{+})\),   each of these operators can be extended to an operator
defined on the \emph{whole} space  \(L^{2}(\mathbb{R}_{+})\),  which maps  \(L^{2}(\mathbb{R}_{+})\)
into \(L^{2}(\mathbb{R}_{+})\) isometrically.
We retain the notation  \(\boldsymbol{\mathscr{C}}\) and \(\boldsymbol{\mathscr{S}}\)
 for the extended operators.
 In an even broader context, the transformation \eqref{CoSiTr} can be considered for
 those \(x\), for which the the integrals on the right--hand sides are meaningful.

Considered as operators in the Hilbert space \(L^{2}(\mathbb{R}_{+})\), the operators
 \(\boldsymbol{\mathscr{C}}\) and \(\boldsymbol{\mathscr{S}}\) are self-adjoint operators
 which satisfy the equalities
 \begin{equation}
\label{Inv}
\boldsymbol{\mathscr{C}}^{2}=\boldsymbol{\mathscr{I}},\quad \boldsymbol{\mathscr{S}}^{2}=\boldsymbol{\mathscr{I}},
\end{equation}
where \(\boldsymbol{\mathscr{I}}\) is the identity operator in \(L^2(\mathbb{R}_{+})\).
Each of the spectra \(\sigma(\boldsymbol{\mathscr{C}})\) and
\(\sigma(\boldsymbol{\mathscr{S}})\) of these operators
consist of two points: \(+1\) and \(-1\). By \(\mathcal{C}_{\lambda}\) and
\(\mathcal{S}_{\lambda}\) we denote the spectral subspaces of the
 operators  \(\boldsymbol{\mathscr{C}}\) and \(\boldsymbol{\mathscr{S}}\),
 respectively, corresponding to the points \(\lambda=1\) and  \(\lambda=-1\)
of their spectra. These spectral subspaces are eigensubspaces:
\begin{subequations}
\label{EigSub}
\begin{alignat}{2}
\label{EigSubc}
\mathcal{C}_{1}&=\{x\in{}L^2(\mathbb{R}_{+}): \boldsymbol{\mathscr{C}}x=x\},\quad &
\mathcal{C}_{-1}&=\{x\in{}L^2(\mathbb{R}_{+}): \boldsymbol{\mathscr{C}}x=-x\};\\
\label{EigSubs}
\mathcal{S}_{1}&=\{x\in{}L^2(\mathbb{R}_{+}): \boldsymbol{\mathscr{S}}x=x\},\quad &
\mathcal{S}_{-1}&=\{x\in{}L^2(\mathbb{R}_{+}): \boldsymbol{\mathscr{S}}x=-x\}.
\end{alignat}
\end{subequations}
Moreover, two orthogonal decompositions hold:
\begin{equation}
\label{OrDe}
L^2(\mathbb{R}_{+})=\mathcal{C}_{1}\oplus\mathcal{C}_{-1},\quad
L^2(\mathbb{R}_{+})=\mathcal{S}_{1}\oplus\mathcal{S}_{-1}\,.
\end{equation}
The spectra of the operators \(\boldsymbol{\mathscr{C}}\) and \(\boldsymbol{\mathscr{S}}\)
 are highly degenerated: the eigensubspaces \(\mathcal{C}_{\lambda}\) and
\(\mathcal{S}_{\lambda}\) are infinite--dimensional. Many bases are
 possible in these subspaces. The best known are the bases formed by
the Hermite functions \(h_{k}(t)\) restricted onto \(\mathbb{R}_{+}\).

The Hermite functions \(h_k(t)\) are defined as
 \begin{equation}
 \label{HF}
h_{k}(t)=e^{\frac{t^2}{2}}\,\dfrac{d^k\,(e^{-t^2})}{dt^k},\,
\,t\in\mathbb{R},\,\,\,\,\,k=0,\,1,\,2,\,\ldots\,\,.
\end{equation}
It is known that the system \(\lbrace{}h_{k}\rbrace_{k=0,1,2,\,\ldots\,\,}\)
forms an orthogonal basis in the Hilbert space \(L^2(\mathbb{R})\).
The properties of the Hermite functions \(h_k\) as eigenfunctions of
the Fourier transform was established by N.\,Wiener, \cite[Chapter 1]{1}.
 In \cite{1}, N.\,Wiener developed
\(L^2\)-theory of the Fourier transform which was based on these
properties of the Hermite functions.

The Hermite functions \(h_{k}\) are originally defined on the whole real
axis \(\mathbb{R}\). The restrictions \({h_k}_{|\mathbb{R}_{+}}\)
of the Hermite functions \(h_k\) onto \(\mathbb{R}_{+}\) are considered
as vectors of the Hilbert space \(L^2(\mathbb{R}_{+})\).
Each of two systems
\(\lbrace{h_{2k}}_{_{|\mathbb{R}_+}}\rbrace_{k=0,1,2,\,\ldots\,\,}\)
and \(\lbrace{h_{2k+1}}_{_{|\mathbb{R}_+}} {}\rbrace_{k=0,1,2,\,\ldots\,\,}\) \,\,is an orthogonal basis in \(L^2(\mathbb{R}_{+})\). The systems \(\big\lbrace{h_{4l}}_{_{|\mathbb{R}_+}}\big\rbrace_{l=0,1,2,\,\ldots}\),
\(\big\lbrace{h_{4l+2}}_{_{|\mathbb{R}_+}}\big\rbrace_{l=0,1,2,\,\ldots}\),
\(\big\lbrace{h_{4l+1}}_{_{|\mathbb{R}_+}}\big\rbrace_{l=0,1,2,\,\ldots}\), and
\(\big\lbrace{h_{4l+3}}_{_{|\mathbb{R}_+}}\big\rbrace_{l=0,1,2,\,\ldots}\) are orthogonal bases of the eigensubspaces
\(\mathcal{C}_{1}\), \(\mathcal{C}_{-1}\), \(\mathcal{S}_{1}\) and \(\mathcal{S}_{-1}\),
respectively.
 We present other "bases" which are not discrete orthogonal sequences of vectors, but continuous orthogonal chains of (generalized) vectors. This is the main goal of this paper.  Our work may be considered as a further development of results in \cite{2} by Hardy and Titchmarsh. (The contents of \cite{2} and \cite{3} were reproduced in the book \cite{4}.)

\noindent
 \textbf{2.}\hspace{1.0ex} First we discuss eigenfunctions of the transforms
\(\boldsymbol{\mathscr{C}}\) and \(\boldsymbol{\mathscr{S}}\) in the broad sense.
These transforms are of the form \(x\to\boldsymbol{\mathscr{K}}x\), where
\begin{equation}
\label{KDoP}
(\boldsymbol{\mathscr{K}}x)(t)=\int\limits_{\mathbb{R}_{+}}k(t\xi)x(\xi)d\xi,
\end{equation}
and \(k\) is a function of \emph{one} variable defined on \(\mathbb{R}_{+}\).
(It should be mentioned that some operational calculus related to operators of the form \eqref{KDoP} was developed in \cite{5}.)
\begin{remark}
\label{RePr}
If the integral \eqref{KDoP} does not exist as a Lebesgue integral,
i.e. the function \(k(t\xi)x(\xi)\) of the
variable \(\xi\) is not summable, then a meaning may be attached to
the integral  \eqref{KDoP}  by means of some regularization procedure.  We use the regularization procedure
\begin{equation}
\label{OCRP1}
\int\limits_{\mathbb{R}_{+}}k(t\xi)x(\xi)d\xi=\lim\limits_{\varepsilon\to+0}
\int\limits_{\mathbb{R}_{+}}e^{-\varepsilon{}\xi}k(t\xi)x(\xi)d\xi,\\
\end{equation}
and the regularization procedure
\begin{equation}
\label{OCRP2}
\int\limits_{\mathbb{R}_{+}}k(t\xi)x(\xi)d\xi=\lim\limits_{R\to+\infty}
\int\limits_{0}^{R}k(t\xi)x(\xi)d\xi\,.
\end{equation}
\end{remark}{\ }\\

If for some \(a\in\mathbb{C}\) both integrals
\begin{equation}
\label{OA}
\int\limits_{\mathbb{R}_{+}}k(t\xi){\xi}^{-a}d\xi \quad \textup{and}\quad \int\limits_{\mathbb{R}_{+}}k(t\xi){\xi}^{a-1}d\xi
\end{equation}
 have a meaning for every positive \(t\),
then, changing the variable \mbox{\(t\xi\to\xi\)}, we obtain
\begin{equation}
\label{GeEiFu}
\boldsymbol{\mathscr{K}}\,t^{-a}=\varkappa(a)\,t^{a-1},\quad \boldsymbol{\mathscr{K}}\,t^{a-1}=\varkappa(1-a)\,t^{-a},
\end{equation}
where
\begin{equation}
\label{MaGeOp}
{\varkappa}(a)=\int\limits_{\mathbb{R}_{+}}k(\xi)\xi^{-a}d\xi,\quad
\varkappa(1-a)=\int\limits_{\mathbb{R}_{+}}k(\xi)\xi^{a-1}d\xi.
\end{equation}

The equalities \eqref{GeEiFu} mean that the subspace (two-dimensional if \(a\not=1/2\))
generated by the functions \(t^{-a}\) and \(t^{a-1}\)
is invariant with respect to the transformation \(\boldsymbol{\mathscr{K}}\) and that
the matrix of this operator in the basis \(t^{-a},t^{a-1}\) is:
\begin{math}
\left\|\begin{smallmatrix}
0&\varkappa(1-a)\\
\varkappa(a)&0
\end{smallmatrix}
\right\|.
\end{math}
Thus, assuming that \(\varkappa(a)\not=0,\,\varkappa(1-a)\not=0\), we obtain that the functions
\begin{equation}
\label{EiFuK}
\sqrt{\varkappa(1-a)}t^{-a}+\sqrt{\varkappa(a)}t^{a-1}\quad \textup{and} \quad \sqrt{\varkappa(1-a)}t^{-a}-\sqrt{\varkappa(a)}t^{a-1}
\end{equation}
are the eigenfunctions of the transform \(\boldsymbol{\mathscr{K}}\) corresponding to the
eigenvalues
\begin{equation}
\label{GeEiVa}
\lambda_{+}=\sqrt{\varkappa(a)\varkappa(1-a)}\quad \textup{and} \quad\lambda_{-}=-\sqrt{\varkappa(a)\varkappa(1-a)},
\end{equation}
 respectively.

 To find eigenfunctions  of the form
\eqref{EiFuK} for cosine and sine transforms \(\boldsymbol{\mathscr{C}}\) and \(\boldsymbol{\mathscr{S}}\), we have to calculate the constants \eqref{MaGeOp} corresponding to the functions
 \begin{equation}
 \label{FGKe}
 k_c(\tau)=\sqrt{\frac{2}{\pi}}\cos{}\tau\quad and  \quad{}k_s(\tau)=\sqrt{\frac{2}{\pi}}\sin{}\tau,
 \end{equation}
 which generate the kernels of these integral transforms. This is accomplished in the following\\

 \noindent
 \begin{lemma}%
\label{CCI}%
Let \(\zeta\) belong to the strip
\begin{math}%
0<\textup{Re}\,\zeta<1.
\end{math}%

\noindent
Then
\begin{enumerate}
\item[\textup{1}.] {\ } \\[-8.0ex]
\begin{subequations}
\label{CcI}
\begin{gather}
\label{CcI1}%
 \int\limits_{0}^{\infty}(\cos{}s)\,s^{\zeta-1}\,ds=
\Big(\cos\,\frac{\pi}{2}\zeta\Big)\,\Gamma(\zeta)\,,\\[1.0ex]
 \label{CcI2}
 \int\limits_{0}^{\infty}(\sin{}s)\,s^{\zeta-1}\,ds=
\Big(\sin\,\frac{\pi}{2}\zeta\Big)\,\Gamma(\zeta)\,,
\end{gather}
\end{subequations}
where \(\Gamma\) is the Euler Gamma-function
and the integrals in \eqref{CcI} are understood in the sense
\begin{multline*}
\int\limits_{0}^{\infty}
\bigg\lbrace
\begin{matrix}
\cos{}s\\
\sin{}s
\end{matrix}
\bigg\rbrace\, s^{\,\zeta-1}\,ds\\
=\lim_{R\to+\infty}\int\limits_{0}^{R}\bigg\lbrace
\begin{matrix}
\cos{}s\\
\sin{}s
\end{matrix}
\bigg\rbrace\, s^{\,\zeta-1}\,ds
=\lim_{\varepsilon\to+0}\int\limits_{0}^{+\infty}e^{-\varepsilon{}s}\bigg\lbrace
\begin{matrix}
\cos{}s\\
\sin{}s
\end{matrix}
\bigg\rbrace\, s^{\,\zeta-1}\,ds\,;
\end{multline*}
\item[\textup{2.}] The above limits  exist
uniformly with respect to \(\zeta\) from any fixed compact subset
of the strip \(0<\textup{Re}\,\zeta<1\).
\item[\textup{3.}] Given \(\delta>0\), then for any \(\zeta\) from the strip
\(\delta<\textup{Re}\,\zeta<1-\delta\) and for any \(R\in(0,+\infty)\),
\(\varepsilon\in(0,+\infty)\),
the estimates
\begin{subequations}
\label{Ewrta}
\begin{align}
\label{Ewrta1}
\bigg|\int\limits_{0}^{R}\bigg\lbrace
\begin{matrix}
\cos{}s\\
\sin{}s
\end{matrix}
\bigg\rbrace\,
s^{\zeta-1}ds\bigg|\leq{}C(\delta)%
e^{\frac{\pi}{2}|\textup{Im}\,\zeta|},\\
\label{Ewrta2}
\bigg|\int\limits_{0}^{+\infty}e^{-\varepsilon{}s}\bigg\lbrace
\begin{matrix}
\cos{}s\\
\sin{}s
\end{matrix}
\bigg\rbrace\,
s^{\zeta-1}ds\bigg|\leq{}C(\delta)%
e^{\frac{\pi}{2}|\textup{Im}\,\zeta|},
\end{align}
\end{subequations}
hold, where \(C(\delta)<\infty\) does not depend on \(\zeta\), \(R\), and \(\varepsilon\).
\end{enumerate}
\end{lemma}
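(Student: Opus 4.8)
The plan is to reduce everything to the classical identity $\int_{0}^{\infty}e^{-ps}s^{\zeta-1}\,ds=\Gamma(\zeta)\,p^{-\zeta}$, valid for $\operatorname{Re}p>0$ and $\operatorname{Re}\zeta>0$ with the principal branch of $p^{-\zeta}$ (itself obtained by rotating the Gamma contour, or by analytic continuation in $p$), together with $\cos s=\tfrac12(e^{is}+e^{-is})$ and $\sin s=\tfrac1{2i}(e^{is}-e^{-is})$. Write $\zeta=\sigma+i\tau$ with $0<\sigma<1$, and set $J^{\pm}_{\varepsilon}=\int_{0}^{\infty}e^{-\varepsilon s}e^{\pm is}s^{\zeta-1}\,ds$ (an absolutely convergent integral for $\varepsilon>0$) and $J^{\pm}_{R}=\int_{0}^{R}e^{\pm is}s^{\zeta-1}\,ds$. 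Each of the four regularized integrals in the lemma is $\tfrac12(J^{+}+J^{-})$ or $\tfrac1{2i}(J^{+}-J^{-})$ with the matching regularization, so it suffices to treat $J^{\pm}$.

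For the Abel regularization, apply the classical identity with $p=\varepsilon\mp i$ (legitimate since $\operatorname{Re}(\varepsilon\mp i)=\varepsilon>0$): $J^{\pm}_{\varepsilon}=\Gamma(\zeta)\,(\varepsilon\mp i)^{-\zeta}$. As $\varepsilon\to+0$, $(\varepsilon\mp i)^{-\zeta}\to(\mp i)^{-\zeta}=e^{\pm i\pi\zeta/2}$, hence $J^{\pm}_{\varepsilon}\to\Gamma(\zeta)\,e^{\pm i\pi\zeta/2}$, and then $\tfrac12(e^{i\pi\zeta/2}+e^{-i\pi\zeta/2})\Gamma(\zeta)=\cos(\tfrac{\pi}{2}\zeta)\,\Gamma(\zeta)$ and $\tfrac1{2i}(e^{i\pi\zeta/2}-e^{-i\pi\zeta/2})\Gamma(\zeta)=\sin(\tfrac{\pi}{2}\zeta)\,\Gamma(\zeta)$, which are exactly the right-hand sides of \eqref{CcI1} and \eqref{CcI2}. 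For the cut-off regularization I would rotate the contour onto the imaginary axis: integrate $e^{\pm iz}z^{\zeta-1}$, holomorphic in the open right half-plane (principal branch) and extending integrably to the origin since $\sigma>0$, around the boundary of the quarter-disc of radius $R$ in the first (for $J^{+}$), respectively fourth (for $J^{-}$), quadrant. On the circular arc $z=Re^{i\theta}$ one has $|e^{\pm iz}|=e^{-R\sin|\theta|}$ and $|z^{\zeta-1}|=R^{\sigma-1}e^{-\tau\theta}\le R^{\sigma-1}e^{\pi|\tau|/2}$, so the elementary bound $\int_{0}^{\pi/2}e^{-R\sin\theta}\,d\theta\le\pi/(2R)$ shows the arc contributes at most $\tfrac{\pi}{2}R^{\sigma-1}e^{\pi|\tau|/2}$, which tends to $0$ since $\sigma<1$; on the imaginary segment the integrand becomes a constant multiple of $e^{-t}t^{\zeta-1}$, the constant working out to $e^{\pm i\pi\zeta/2}$, so that $J^{\pm}_{R}=e^{\pm i\pi\zeta/2}\int_{0}^{R}e^{-t}t^{\zeta-1}\,dt+o(1)\to e^{\pm i\pi\zeta/2}\Gamma(\zeta)$; these are the same limits as before. (Alternatively, once $\lim_{R}J^{\pm}_{R}$ is known to exist, Abel's theorem for improper integrals identifies it with $\lim_{\varepsilon\to+0}J^{\pm}_{\varepsilon}$.) This proves part 1 and produces all four limits.

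Part 2 is read off the same formulas: $p\mapsto p^{-\zeta}$ is continuous, uniformly for $\zeta$ in a compact set; $\Gamma$ is locally bounded; $\bigl|\int_{R}^{\infty}e^{-t}t^{\zeta-1}\,dt\bigr|\le\int_{R}^{\infty}e^{-t}t^{\sigma-1}\,dt\to0$ uniformly for $\sigma$ bounded; and on a compact subset of the strip $\sigma$ stays $\le1-\delta_{0}$ for some $\delta_{0}>0$, so the arc bound $\tfrac{\pi}{2}R^{\sigma-1}e^{\pi|\tau|/2}$ decays uniformly. Hence all four limits converge uniformly on compact subsets of $0<\operatorname{Re}\zeta<1$.

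For part 3 fix $\delta>0$ and let $\delta\le\sigma\le1-\delta$. Since $|\varepsilon\mp i|\ge1$ and $|\arg(\varepsilon\mp i)|<\tfrac{\pi}{2}$, one has $|(\varepsilon\mp i)^{-\zeta}|=|\varepsilon\mp i|^{-\sigma}e^{\mp\tau\arg(\varepsilon\mp i)}\le e^{\pi|\tau|/2}$; also $|e^{\pm i\pi\zeta/2}|=e^{\mp\pi\tau/2}\le e^{\pi|\tau|/2}$, and for $R\ge1$ the arc bound is $\le\tfrac{\pi}{2}e^{\pi|\tau|/2}$. The remaining Gamma-type factors are controlled by $C(\delta)$: $|\Gamma(\zeta)|\le C(\delta)$ on the closed strip $\delta\le\sigma\le1-\delta$ (continuous there, with $|\Gamma(\zeta)|=O(e^{-\pi|\tau|/2})$ as $|\tau|\to\infty$ by Stirling) and $\int_{0}^{R}e^{-t}t^{\sigma-1}\,dt\le\Gamma(\sigma)\le C(\delta)$, while for $R\le1$ the trivial estimate $|J^{\pm}_{R}|\le\int_{0}^{R}s^{\sigma-1}\,ds\le\delta^{-1}$ suffices. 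Combining these with $J^{\pm}_{\varepsilon}=\Gamma(\zeta)(\varepsilon\mp i)^{-\zeta}$ and $J^{\pm}_{R}=e^{\pm i\pi\zeta/2}\int_{0}^{R}e^{-t}t^{\zeta-1}\,dt+(\text{arc term})$ yields \eqref{Ewrta1} and \eqref{Ewrta2}. (A single integration by parts actually gives the sharper bound $O(1+|\tau|)$, but only the stated exponential one is needed later.) The one genuine obstacle throughout is the absence of absolute convergence on $[1,\infty)$: this is precisely why contour rotation, or equivalently the complex-parameter Gamma integral, must replace a naive split into real and imaginary parts, and why the estimates have to be arranged to hold uniformly in $R$ and $\varepsilon$ rather than only for the limiting values.
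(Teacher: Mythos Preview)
Your argument is correct and is precisely the ``standard method using integration in the complex plane'' that the paper invokes; the paper in fact omits the proof entirely, so there is nothing to compare beyond noting that your contour rotation onto the imaginary axis and the Jordan-type arc estimate are exactly what the authors have in mind. Your bookkeeping for parts 2 and 3 (uniformity via the $R^{\sigma-1}$ decay of the arc, the trivial bound for $R\le 1$, and boundedness of $\Gamma$ on closed substrips) is clean and suffices.
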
%

We omit proof of Lemma \ref{CCI}. This lemma can be proved by a standard method using integration in the complex plane.\\[-2ex]
\hbox to \linewidth{\hfil \(\square\)}\\

      According to Lemma \ref{CCI},  the integrals
\[\int\limits_{0}^{\infty}
\bigg\lbrace
\begin{matrix}
k_c(s)\\
k_s(s)
\end{matrix}
\bigg\rbrace\, s^{\,-a}\,ds
\quad\textup{and}\quad
\int\limits_{0}^{\infty}
\bigg\lbrace
\begin{matrix}
k_c(s)\\
k_s(s)
\end{matrix}
\bigg\rbrace\, s^{a-1}\,ds,
\]
where \(k_c\) and \(k_s\), \eqref{FGKe}, are the functions generating the kernels of the integral
 transformations \(\boldsymbol{\mathscr{C}}\) and \(\boldsymbol{\mathscr{S}}\),
exist for every \(a\) such that \mbox{\(0<\textup{Re}\,a<1\)}, or, amounting to the same,
\(0<\textup{Re}\,(1-a)<1\).\\
The constants
\(\varkappa_c(a)\) and \(\varkappa_c(1-a)\), corresponding to the function \(k_c(\tau)=\sqrt{\frac{2}{\pi}}\cos{}\tau\), are:
\begin{subequations}
\label{kapp}
\begin{equation}
\label{kappc}
\varkappa_c(a)=\sqrt{\frac{2}{\pi}}\sin\frac{\pi{}a}{2}\,\Gamma(1-a),\quad
\varkappa_c(1-a)=\sqrt{\frac{2}{\pi}}\cos\frac{\pi{}a}{2}\,\Gamma(a).
\end{equation}
The constants
\(\varkappa_s(a)\) and \(\varkappa_s(1-a)\), corresponding to the function \(k_s(\tau)=\sqrt{\frac{2}{\pi}}\sin{}\tau\), are:
\begin{equation}
\label{kapps}
\varkappa_s(a)=\sqrt{\frac{2}{\pi}}\cos\frac{\pi{}a}{2}\,\Gamma(1-a),\quad
\varkappa_s(1-a)=\sqrt{\frac{2}{\pi}}\sin\frac{\pi{}a}{2}\,\Gamma(a).
\end{equation}
\end{subequations}

\noindent
\textbf{3.}\hspace{1.0ex}
Later, we will have to transform the expression \eqref{kapp} for the
constants \(\varkappa_c\) and \(\varkappa_s\) using the following \\
\textsf{Identities for the Euler Gamma-function
 \(\Gamma(\zeta)\)}:
 \begin{subequations}
 \label{Ga}
\begin{alignat}{2}%
\label{Ga1}
\Gamma(\zeta+1)&=\zeta\Gamma(\zeta)\,,& &\quad\text{see\,\,\,%
\cite{6}\,,\,\,\,\textbf{12.12},}\\[1.5ex]
\label{Ga2}
\Gamma(\zeta)\Gamma(1-\zeta)&=\dfrac{\pi}{\sin{}\pi\zeta}\,,\,\,\,&
&
\quad\text{see\,\,\,\cite{6}\,,\,\,\,\textbf{12.14},}\\[1.5ex]
\label{Ga3}
\Gamma(\zeta)\Gamma\bigg(\zeta+\frac{1}{2}\bigg)&=2\sqrt{\pi}\,2^{-2\zeta}\Gamma(2\zeta),\,\,&
& \quad\text{see\,\,\,\cite{6}\,,\,\,\,\textbf{12.15}.}
\end{alignat}
\end{subequations}
\begin{lemma}
\label{Idka}
The following identities hold:
\begin{subequations}
\begin{gather}
\label{Gam1}%
 \sqrt{\frac{2}{\pi}}\,
\Big(\cos\frac{\pi}{2}\zeta\Big)\,\Gamma(\zeta)=%
2^{\,\zeta-\frac{1}{2}}%
\frac{\Gamma\big(\frac{\zeta}{2}\big)}{\Gamma\big(\frac{1}{2}-\frac{\zeta}{2}\big)}\,,\\
\label{Gam2} \sqrt{\frac{2}{\pi}}\,
\Big(\sin\frac{\pi}{2}\zeta\Big)\,\Gamma(\zeta)=2^{\,\zeta-\frac{1}{2}}%
\frac{\Gamma\big(\frac{1}{2}+\frac{\zeta}{2}\big)}{\Gamma\big(1-\frac{\zeta}{2}\big)}\,.
\end{gather}
\end{subequations}
\end{lemma}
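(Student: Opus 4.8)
The plan is to derive both \eqref{Gam1} and \eqref{Gam2} from the duplication formula \eqref{Ga3} and the reflection formula \eqref{Ga2}; the recursion \eqref{Ga1} is not actually needed here. First I would rewrite $\Gamma(\zeta)$ by applying \eqref{Ga3} with $\zeta/2$ in place of $\zeta$, obtaining
\begin{equation*}
\Gamma(\zeta)=\frac{2^{\,\zeta-1}}{\sqrt{\pi}}\;\Gamma\Big(\frac{\zeta}{2}\Big)\,\Gamma\Big(\frac{\zeta+1}{2}\Big).
\end{equation*}
Substituting this into the left-hand side of \eqref{Gam1} and cancelling the common factor $2^{\,\zeta-1}\Gamma(\zeta/2)$ reduces \eqref{Gam1} to the elementary claim
\begin{equation*}
\sqrt{\frac{2}{\pi}}\,\Big(\cos\frac{\pi}{2}\zeta\Big)\,\frac{1}{\sqrt{\pi}}\,\Gamma\Big(\frac{1}{2}+\frac{\zeta}{2}\Big)\,\Gamma\Big(\frac{1}{2}-\frac{\zeta}{2}\Big)=\sqrt{2},
\end{equation*}
while substituting into \eqref{Gam2} and cancelling the common factor $2^{\,\zeta-1}\Gamma\big(\tfrac12+\tfrac{\zeta}{2}\big)$ reduces \eqref{Gam2} to
\begin{equation*}
\sqrt{\frac{2}{\pi}}\,\Big(\sin\frac{\pi}{2}\zeta\Big)\,\frac{1}{\sqrt{\pi}}\,\Gamma\Big(\frac{\zeta}{2}\Big)\,\Gamma\Big(1-\frac{\zeta}{2}\Big)=\sqrt{2}.
\end{equation*}

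Next I would dispatch each of these by the reflection formula \eqref{Ga2}. For the first, \eqref{Ga2} with $\zeta$ replaced by $\tfrac12+\tfrac{\zeta}{2}$ gives $\Gamma\big(\tfrac12+\tfrac{\zeta}{2}\big)\Gamma\big(\tfrac12-\tfrac{\zeta}{2}\big)=\pi/\sin\pi\big(\tfrac12+\tfrac{\zeta}{2}\big)=\pi/\cos\tfrac{\pi}{2}\zeta$, which exactly cancels the cosine and leaves $\sqrt{2/\pi}\cdot\pi^{-1/2}\cdot\pi=\sqrt{2}$, as required. For the second, \eqref{Ga2} with $\zeta$ replaced by $\tfrac{\zeta}{2}$ gives $\Gamma\big(\tfrac{\zeta}{2}\big)\Gamma\big(1-\tfrac{\zeta}{2}\big)=\pi/\sin\tfrac{\pi}{2}\zeta$, which cancels the sine and again leaves $\sqrt{2}$. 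This establishes both identities.

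Finally I would note that every manipulation is legitimate on the strip $0<\textup{Re}\,\zeta<1$: there all of $\Gamma(\zeta)$, $\Gamma(\zeta/2)$, $\Gamma\big(\tfrac12\pm\tfrac{\zeta}{2}\big)$, $\Gamma\big(1-\tfrac{\zeta}{2}\big)$ are holomorphic and nonvanishing, and $\cos\tfrac{\pi}{2}\zeta$, $\sin\tfrac{\pi}{2}\zeta$ are nonzero as well, so none of the cancellations hides a zero or a pole; by analytic continuation the identities then persist wherever both sides are defined. There is no genuine obstacle in this lemma — it is a direct consequence of \eqref{Ga2} and \eqref{Ga3} — and the only point requiring care is the bookkeeping of the half-integer shifts $\zeta\mapsto\zeta/2$ and $\zeta\mapsto\tfrac12+\tfrac{\zeta}{2}$ in those two formulas.
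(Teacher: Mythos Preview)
Your proof is correct and follows essentially the same approach as the paper: both arguments combine the duplication formula \eqref{Ga3} (applied at $\zeta/2$) with the reflection formula \eqref{Ga2} (applied at $\tfrac{\zeta}{2}$ and at $\tfrac12+\tfrac{\zeta}{2}$), differing only in the order of substitution and cancellation. The paper first rewrites $\cos\tfrac{\pi}{2}\zeta$ and $\sin\tfrac{\pi}{2}\zeta$ via \eqref{Ga2} and then multiplies by the duplicated form of $\Gamma(\zeta)$, whereas you substitute the duplicated $\Gamma(\zeta)$ first and then invoke \eqref{Ga2}; the content is identical.
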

\begin{proof}
From \eqref{Ga2} it follows that
\[\cos\frac{\pi}{2}\zeta=\frac{\pi}{\Gamma\big(\frac{1}{2}-\frac{\zeta}{2}\big)\,%
\Gamma\big(\frac{1}{2}+\frac{\zeta}{2}\big)}\,.\]%
 From \eqref{Ga3} it follows that
 \[\Gamma(\zeta)=
 \pi^{-\frac{1}{2}}\,\Gamma{\textstyle\big(\frac{\zeta}{2}\big)}\,%
 \Gamma{\textstyle\big(\frac{1}{2}+\frac{\zeta}{2}\big)}\,2^{\zeta-1}\,.\]
 Combining the last two formulas, we obtain \eqref{Gam1}.
 Combining the last formula with the formula
 \[\sin\frac{\pi}{2}\zeta=\frac{\pi}{\Gamma\big(\frac{\zeta}{2}\big)\,%
\Gamma\big(1-\frac{\zeta}{2}\big)}\,,\]
 we obtain \eqref{Gam2}.
\end{proof}
\begin{lemma}
\label{OthE}
The values \(\varkappa_c(a),\,\varkappa_c(1-a),\,\varkappa_s(a),\,\varkappa_s(1-a)\),
which appear as coefficients of linear combinations \eqref{EiFuK},
are
\begin{subequations}
\label{CLCE}
\begin{alignat}{2}
\label{CLCEc}
\varkappa_c(a)&=
2^{\frac{1}{2}-a}\frac{\Gamma(\frac{1}{2}-\frac{a}{2})}{\Gamma(\frac{a}{2})},
\quad&\quad
\varkappa_c(1-a)&=
2^{a-\frac{1}{2}}\frac{\Gamma(\frac{a}{2})}{\Gamma(\frac{1}{2}-\frac{a}{2})}\\
\label{CLCEs}
\varkappa_s(a)&=
2^{\frac{1}{2}-a}\frac{\Gamma(1-\frac{a}{2})}{\Gamma(\frac{1}{2}+\frac{a}{2})},
\quad&\quad
\varkappa_s(1-a)&=
2^{a-\frac{1}{2}}\frac{\Gamma(\frac{1}{2}+\frac{a}{2})}{\Gamma(1-\frac{a}{2})}.
\end{alignat}
\end{subequations}
\end{lemma}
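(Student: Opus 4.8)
The plan is to obtain all four identities in \eqref{CLCE} directly from the closed forms \eqref{kapp} for the constants \(\varkappa_c,\varkappa_s\) combined with Lemma \ref{Idka}. The only point requiring a moment's thought is that in Lemma \ref{Idka} the \emph{cosine} sits next to \(\Gamma(\zeta)\) in \eqref{Gam1} and the \emph{sine} in \eqref{Gam2}, whereas in two of the four expressions \eqref{kapp} the trigonometric factor is "of the wrong type" relative to the argument of the accompanying \(\Gamma\); there one first switches it using the elementary relations \(\sin\frac{\pi a}{2}=\cos\frac{\pi}{2}(1-a)\) and \(\cos\frac{\pi a}{2}=\sin\frac{\pi}{2}(1-a)\).

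Concretely: for \(\varkappa_c(1-a)\), formula \eqref{kappc} gives \(\varkappa_c(1-a)=\sqrt{2/\pi}\,(\cos\frac{\pi a}{2})\,\Gamma(a)\), which is exactly the left-hand side of \eqref{Gam1} with \(\zeta=a\), yielding the second identity in \eqref{CLCEc}. For \(\varkappa_s(1-a)\), formula \eqref{kapps} gives \(\varkappa_s(1-a)=\sqrt{2/\pi}\,(\sin\frac{\pi a}{2})\,\Gamma(a)\), the left-hand side of \eqref{Gam2} with \(\zeta=a\), yielding the second identity in \eqref{CLCEs}. For \(\varkappa_c(a)\), I rewrite \eqref{kappc} as \(\varkappa_c(a)=\sqrt{2/\pi}\,(\cos\frac{\pi}{2}(1-a))\,\Gamma(1-a)\) and apply \eqref{Gam1} with \(\zeta=1-a\); after simplifying the half-argument \((1-a)/2=\tfrac12-\tfrac a2\) and the exponent \((1-a)-\tfrac12=\tfrac12-a\) this is the first identity in \eqref{CLCEc}. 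Finally, for \(\varkappa_s(a)\), I rewrite \eqref{kapps} as \(\varkappa_s(a)=\sqrt{2/\pi}\,(\sin\frac{\pi}{2}(1-a))\,\Gamma(1-a)\) and apply \eqref{Gam2} with \(\zeta=1-a\); the half-arguments simplify via \(\tfrac12+(1-a)/2=1-\tfrac a2\) and \(1-(1-a)/2=\tfrac12+\tfrac a2\), giving the first identity in \eqref{CLCEs}.

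I do not expect any real obstacle here: the substantive content is entirely in Lemma \ref{Idka}, and what remains is the routine bookkeeping of half-arguments and exponents of \(2\) sketched above. It is perhaps worth remarking, for consistency with the hypothesis \(\varkappa(a)\neq0,\ \varkappa(1-a)\neq0\) used in Section \textbf{2}, that when \(0<\textup{Re}\,a<1\) every \(\Gamma\)-value occurring in the numerators and denominators of \eqref{CLCE} has argument with real part in \((0,1)\), hence is finite and nonzero, so all four expressions are well defined.
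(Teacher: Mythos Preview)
Your argument is correct and is precisely the derivation the paper has in mind: Lemma \ref{OthE} is stated in the paper without proof, immediately after Lemma \ref{Idka}, as a direct consequence of \eqref{kapp} and the identities \eqref{Gam1}--\eqref{Gam2}. The trigonometric swap \(\sin\frac{\pi a}{2}=\cos\frac{\pi}{2}(1-a)\), \(\cos\frac{\pi a}{2}=\sin\frac{\pi}{2}(1-a)\) that you use to handle \(\varkappa_c(a)\) and \(\varkappa_s(a)\) is exactly the expected step, and your closing remark on nonvanishing is a nice bonus.
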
 {\ }\\

\noindent
 \textbf{4.}\hspace{1.0ex}
From the expressions \eqref{CLCE} we see that
the products \(\varkappa_c(a)\varkappa_c(1-a)\) and \(\varkappa_s(a)\varkappa_s(1-a)\)
do not depend on \(a\):
\begin{equation*}
\varkappa_c(a)\varkappa_c(1-a)=1,\quad \varkappa_s(a)\varkappa_s(1-a)=1\,\quad 0<\textup{Re}\,a<1.
\end{equation*}
\begin{theorem}
\label{GeEig}
Let \(a\in\mathbb{C},\,0<\textup{Re}\,a<1\), \(a\not=\tfrac{1}{2}\), and
\(\varkappa_c(a),\,\varkappa_c(1-~a)\), \(\varkappa_s(a)\), \(\varkappa_s(1-a)\) be the values which appear
in \eqref{CLCE}.

\noindent
 Then:
\begin{enumerate}
\item[\textup{1.}]
 The functions
 \begin{subequations}
 \label{GECoT}
\begin{align}
\label{GECoTp}
E_{c}^{+}(t,a)&=\sqrt{\varkappa_c(1-a)}t^{-a}+\sqrt{\varkappa_c(a)}t^{a-1},\\
\label{GECoTm}
E_{c}^{-}(t,a)&=\sqrt{\varkappa_c(1-a)}t^{-a}-\sqrt{\varkappa_c(a)}t^{a-1},
\end{align}
\end{subequations}
of  variable \(t\in\mathbb{R}_{+}\)
are eigenfunctions \textup{(}in a broad sense\textup{)} of the cosine transform
\(\boldsymbol{\mathscr{C}}\) corresponding to the eigenvalues \(+1\) and \(-1\) respectively:
\begin{align*}
E_{c}^{+}(t,a)&=\phantom{-}
\lim_{R\to\infty}\sqrt{\tfrac{2}{\pi}}\int\limits_{0}^{R}\cos(t\xi)\,E_{c}^{+}(\xi,a)\,d\xi,\\
E_{c}^{-}(t,a)&=-
\lim_{R\to\infty}\sqrt{\tfrac{2}{\pi}}\int\limits_{0}^{R}\cos(t\xi)\,E_{c}^{-}(\xi,a)\,d\xi;
\end{align*}
 \item[\textup{2.}]
 The functions
  \begin{subequations}
  \label{GESiT}
\begin{align}
\label{GESiTp}
E_{s}^{+}(t,a)&=\sqrt{\varkappa_s(1-a)}t^{-a}+\sqrt{\varkappa_s(a)}t^{a-1},\\
\label{GESiTm}
E_{s}^{-}(t,a)&=\sqrt{\varkappa_s(1-a)}t^{-a}-\sqrt{\varkappa_s(a)}t^{a-1}
\end{align}
\end{subequations}
of  variable \(t\in\mathbb{R}_{+}\) are eigenfunctions \textup{(}in a broad sense\textup{)} of the sine\phantom{co} transform
\(\boldsymbol{\mathscr{S}}\) corresponding to the eigenvalues \(+1\) and \(-1\) respectively:
\begin{align}
E_{s}^{+}(t,a) &=\phantom{-}
\lim_{R\to\infty}\sqrt{\tfrac{2}{\pi}}\int\limits_{0}^{R}\sin(t\xi)\,E_{s}^{+}(\xi,a)\,d\xi,\\
E_{s}^{-}(t,a) &=-
\lim_{R\to\infty}\sqrt{\tfrac{2}{\pi}}\int\limits_{0}^{R}\sin(t\xi)\,E_{s}^{-}(\xi,a)\,d\xi.
\end{align}
\end{enumerate}
For fixed \(t\in(0,\infty)\), the limits  exist uniformly with respect to \(a\),
from any compact subset of the strip \(0<\textup{Re}\,a<1\).

\end{theorem}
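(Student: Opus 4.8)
The plan is to derive the eigenvalue equations from the ``diagonal action'' formulas of Section~2, which were obtained there by a formal change of variables and which Lemma~\ref{CCI} now makes rigorous. Fix $a\in\mathbb{C}$ with $0<\textup{Re}\,a<1$, $a\neq\tfrac12$, and fix $t\in(0,\infty)$. Substituting $s=t\xi$ in the truncated integral gives
\[
\sqrt{\tfrac{2}{\pi}}\int_{0}^{R}\cos(t\xi)\,\xi^{-a}\,d\xi
= t^{a-1}\,\sqrt{\tfrac{2}{\pi}}\int_{0}^{tR}(\cos s)\,s^{-a}\,ds ,
\]
and since $0<\textup{Re}\,(1-a)<1$, part~1 of Lemma~\ref{CCI} (with $\zeta=1-a$, using \eqref{CcI1}) shows that the right-hand side tends, as $R\to\infty$, to $t^{a-1}\varkappa_c(a)$ with $\varkappa_c(a)=\sqrt{2/\pi}\,\sin(\tfrac{\pi a}{2})\,\Gamma(1-a)$, i.e. exactly the value in \eqref{kappc}, which equals the expression in \eqref{CLCEc}. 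The identical computation with $s^{a-1}$ in place of $s^{-a}$ (now $\zeta=a$) gives $\boldsymbol{\mathscr{C}}\,t^{a-1}=\varkappa_c(1-a)\,t^{-a}$ in the sense of \eqref{OCRP2}, and the two analogous computations with $\sin$ replacing $\cos$ (using \eqref{CcI2}) give $\boldsymbol{\mathscr{S}}\,t^{-a}=\varkappa_s(a)\,t^{a-1}$ and $\boldsymbol{\mathscr{S}}\,t^{a-1}=\varkappa_s(1-a)\,t^{-a}$. Since $t^{-a},t^{a-1}\notin L^2(\mathbb{R}_+)$, these are identities in the broad sense of Remark~\ref{RePr} only; the substitution above, together with the appeal to Lemma~\ref{CCI}, is the one genuinely analytic point of the argument.

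Next I would take linear combinations. By linearity of $\boldsymbol{\mathscr{C}}$ on the span of $t^{-a}$ and $t^{a-1}$,
\[
\boldsymbol{\mathscr{C}}\,E_{c}^{+}(t,a)
=\sqrt{\varkappa_c(1-a)}\,\varkappa_c(a)\,t^{a-1}+\sqrt{\varkappa_c(a)}\,\varkappa_c(1-a)\,t^{-a}.
\]
Here one uses the product identity $\varkappa_c(a)\varkappa_c(1-a)=1$ recorded just before the theorem, valid throughout the strip. Because the two Gamma-factors in \eqref{CLCEc} have neither zeros nor poles for $0<\textup{Re}\,a<1$, the number $\varkappa_c(a)$ is finite and nonzero there, so the branches of the square roots may be fixed consistently by $\sqrt{\varkappa_c(1-a)}:=\varkappa_c(a)^{-1/2}$; then $\sqrt{\varkappa_c(1-a)}\,\varkappa_c(a)=\sqrt{\varkappa_c(a)}$ and $\sqrt{\varkappa_c(a)}\,\varkappa_c(1-a)=\sqrt{\varkappa_c(1-a)}$, and the right-hand side above collapses to $E_{c}^{+}(t,a)$. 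Replacing ``$+$'' by ``$-$'' reverses the sign of exactly one of the two terms, yielding $\boldsymbol{\mathscr{C}}\,E_{c}^{-}(t,a)=-E_{c}^{-}(t,a)$; statement~2 for $\boldsymbol{\mathscr{S}}$ follows verbatim from $\varkappa_s(a)\varkappa_s(1-a)=1$.

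Finally I would dispose of the uniformity claim: for fixed $t$ the limit of $\sqrt{2/\pi}\int_{0}^{R}\cos(t\xi)\xi^{-a}\,d\xi$ is $t^{a-1}$ times the limit in Lemma~\ref{CCI}, which by part~2 of that lemma is attained uniformly for $\zeta=1-a$ in any compact subset of the strip; the factor $t^{a-1}$ and the coefficients $\sqrt{\varkappa_c(1-a)},\sqrt{\varkappa_c(a)}$ are continuous in $a$, hence bounded on compacta, so uniformity survives the finite linear combination, and the same applies to the other three integrals. I do not expect any obstacle beyond the bookkeeping of the first paragraph: once Lemma~\ref{CCI} and the relation $\varkappa_c(a)\varkappa_c(1-a)=\varkappa_s(a)\varkappa_s(1-a)\equiv1$ are in hand, the theorem is essentially a corollary.
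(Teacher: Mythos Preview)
Your proposal is correct and follows precisely the route the paper lays out in Sections~2--4: the change of variables $s=t\xi$ combined with Lemma~\ref{CCI} to make \eqref{GeEiFu} rigorous, the linear combination \eqref{EiFuK}, the product identity $\varkappa_c(a)\varkappa_c(1-a)=\varkappa_s(a)\varkappa_s(1-a)=1$, and part~2 of Lemma~\ref{CCI} for the uniformity (the paper in fact states Theorem~\ref{GeEig} without a separate proof, treating it as an immediate consequence of that buildup, and your handling of the square-root branches is exactly what Remark~\ref{agr} records).
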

\begin{remark}
\label{agr}
In \eqref{GECoT} and \eqref{GESiT}, the values of the square roots \(\sqrt{\varkappa(a)}\)
and \(\sqrt{\varkappa(1-a)}\)
should be chosen such that their products equal \(1\).
\end{remark}
\begin{remark}
\label{half}
For \(a=\tfrac{1}{2}\) there is only one eigenfunction
\[E\big(t,\tfrac{1}{2}\big)=2t^{-\frac{1}{2}}.\]
\end{remark}
\begin{remark}
\label{Redcy}
Since
\begin{subequations}
\label{redun}
\begin{alignat}{2}
\label{redunc}
E_{c}^{+}(t,a)&=E_{c}^{+}(t,1-a),\quad&E_{c}^{-}(t,a)&=-E_{c}^{-}(t,1-a),\\
\label{reduns}
 E_{s}^{+}(t,a)&=E_{s}^{+}(t,1-a),\quad&E_{s}^{-}(t,a)&=-E_{s}^{-}(t,1-a),
 \end{alignat}
 \end{subequations}
 each eigenfunction appears in the family \(\{E_{c,s}^{\pm}(t,a)\}_{0<\textup{Re}\,a<1}\)
 twice. To avoid this redundancy, we should consider the family where only one of the
 points \(a\) or \(1-a\) appear.
\end{remark}

\noindent
\textbf{5.}\hspace{1.0ex}If \(0<\textup{Re}\,a<1\) and
 \(x(t)\) is any of the eigenfunctions of the form either \eqref{GECoT}, or \eqref{GESiT},
then the integral \(\int\limits_{\mathbb{R}_{+}}|x(t)|^2\) diverges. Thus, none of these
eigenfunctions belong to \(L^2(\mathbb{R}_{+})\). This integral diverges both at points
\(t=+0\) and at point \(t=+\infty\). However, this integral diverges variously for \(a\) with
\(\textup{Re}\,a=\frac{1}{2}\) and for \(a\) with \(\textup{Re}\,a\not=\frac{1}{2}\).
If \(\textup{Re}\,a=\frac{1}{2}\), then the integrals diverge \emph{logarithmically} both at
\(t=+0\) and at \(t=+\infty\). If \(\textup{Re}\,a\not=\frac{1}{2}\), then the integrals
diverge more strongly: \emph{powerwise}. We try to construct eigenfunctions
of the operator \(\boldsymbol{\mathscr{C}}\) (of operator \(\boldsymbol{\mathscr{S}}\)) from \(L^2\) as continuous combinations of the eigenfunctions of the form \eqref{GECoT}
(of the form \eqref{GESiT}). Our hope is that singularities
of  "\emph{continuous linear combinations}" of
eigenfunctions, which are in some sense an \emph{averaging}  of eigenfunctions of the family,
are weaker than singularities of individual eigenfunctions. Such continuous linear combinations
should \emph{not} include eigenfunctions of the form \eqref{GECoT} and \eqref{GESiT}
with \(a\!:\,\textup{Re}\,a\not=\frac{1}{2}\). Singularities of eigenfunctions
with \(a\!:\,\textup{Re}\,a\not=\frac{1}{2}\) are too strong
and can not disappear by averaging. Thus, we have to restrict ourselves to \(a\)'s of the
form \(a=\frac{1}{2}+i\tau,\,\tau\in\mathbb{R}\).

Considering the case \(\textup{Re}\,a=\frac{1}{2}\) in more detail, we introduce
special notation for the eigenfunctions \(E_{c,s}^{\pm}(t,\frac{1}{2}+i\tau)\}\):
\begin{subequations}
\label{eFE}
\begin{alignat}{2}
\label{eFEc}
e_c^{+}(t,\tau)&=\tfrac{1}{2\sqrt{\pi}}E_{c}^{+}(t,\tfrac{1}{2}+i\tau),
\quad&e_c^{-}(t,\tau)&=\tfrac{1}{2i\sqrt{\pi}}E_{c}^{-}(t,\tfrac{1}{2}+i\tau),\\
\label{eFEs}
e_s^{+}(t,\tau)&=\tfrac{1}{2\sqrt{\pi}}E_{s}^{+}(t,\tfrac{1}{2}+i\tau),
\quad&e_s^{-}(t,\tau)&=\tfrac{1}{2i\sqrt{\pi}}E_{s}^{-}(t,\tfrac{1}{2}+i\tau).
\end{alignat}
\end{subequations}
(We include the normalizing factor \(\tfrac{1}{2\sqrt{\pi}}\) in the definition of the functions
\(e_{c,s}^{\pm}\).)
According to \eqref{CLCE}, \eqref{GECoT}, the functions \(e_{c,s}^{\pm}(t,\tau)\)
can be expressed as
\begin{subequations}
\label{ec}
\begin{align}
\label{ecp}
e_c^{+}(t,\tau)&=\tfrac{1}{2\sqrt{\pi}}\Big(
t^{-\frac{1}{2}-i\tau}\,c(\tau)+
t^{-\frac{1}{2}+i\tau}\,c(-\tau)\Big),\\
\label{ecm}
e_c^{-}(t,\tau)&=\tfrac{1}{2i\sqrt{\pi}}\Big(
t^{-\frac{1}{2}-i\tau}\,c(\tau)-
t^{-\frac{1}{2}+i\tau}\,c(-\tau)\Big),
\end{align}
\end{subequations}
\begin{subequations}
\label{es}
\begin{align}
\label{esp}
e_s^{+}(t,\tau)&=\tfrac{1}{2\sqrt{\pi}}\Big(
t^{-\frac{1}{2}-i\tau}\,s(\tau)+
t^{-\frac{1}{2}+i\tau}\,s(-\tau)\Big),\\
\label{esm}
e_s^{-}(t,\tau)&=\tfrac{1}{2i\sqrt{\pi}}\Big(
t^{-\frac{1}{2}-i\tau}\,s(\tau)-
t^{-\frac{1}{2}+i\tau}\,s(-\tau)\Big),
\end{align}
\end{subequations}
where \(c(\tau),\,s(\tau)\) are "phase factors":
\begin{subequations}
\label{etau}
\begin{align}
\label{etauc}
c(\tau)&=
2^{i\frac{\tau}{2}}\exp\big\{i\arg\Gamma(\tfrac{1}{4}+i\tfrac{\tau}{2})\big\},
\ \ -\infty<\tau<\infty,\\
\label{etaus}
s(\tau)&=2^{i\frac{\tau}{2}}\exp\big\{i\arg\Gamma(\tfrac{3}{4}+i\tfrac{\tau}{2})\big\},
\ \ -\infty<\tau<\infty.
\end{align}
\end{subequations}
In \eqref{etau},\,
\(\exp\{i\arg\Gamma(\zeta)\}=\tfrac{\Gamma(\zeta)}{|\Gamma(\zeta)|}\).
{\ }\\

\noindent
 Since \(c(\tau)=\overline{c(-\tau)},\,s(\tau)=\overline{s(-\tau)}\) for real \(\tau\),
  the values of the functions \(e_c^{+}(t,\tau)\),
  \(e_c^{-}(t,\tau)\), \(e_s^{+}(t,\tau)\), \(e_s^{-}(t,\tau)\) are \emph{real} for \(t\in(0,\infty)\),  \( \tau\in(0,\infty)\).

 \begin{remark}
 \label{posi}
 The parameter \(\tau\), which enumerates
 the families \(\{e_{c}^{\pm}(t,\tau\}\), \(\{e_{s}^{\pm}(t,\tau\}\), runs over
 the interval \((0,\infty)\). There is no need to consider negative \(\tau\). \textup{(}See \textup{Remark \ref{Redcy}}\textup{)}.
  \end{remark}

\noindent
\textbf{6.}\hspace{1.0ex} Let us introduce four integral transforms \(\boldsymbol{\mathscr{T}}_c^{+}\),
\(\boldsymbol{\mathscr{T}}_c^{-}\), \(\boldsymbol{\mathscr{T}}_s^{+}\), \(\boldsymbol{\mathscr{T}}_s^{-}\).\\  For
 \(\phi(t)\in{}L^1(\mathbb{R}_{+})\) and \(t>0\), let
 us define
 \begin{subequations}
 \label{MIT}
 \begin{alignat}{2}
 \label{MITp}
 (\boldsymbol{\mathscr{T}}_c^{+}\phi)(t)&=
 \int\limits_{\mathbb{R}_{+}}\,e_{c}^{+}(t,\tau)\phi(\tau)\,d\tau,\quad&
 (\boldsymbol{\mathscr{T}}_c^{-}\phi)(t)&=
 \int\limits_{\mathbb{R}_{+}}\,e_{c}^{-}(t,\tau)\phi(\tau)\,d\tau,\\
  \label{MITm}
(\boldsymbol{\mathscr{T}}_s^{+}\phi)(t)&=
\int\limits_{\mathbb{R}_{+}}\,e_{s}^{+}(t,\tau)\phi(\tau)\,d\tau,\quad&
 (\boldsymbol{\mathscr{T}}_s^{-}\phi)(t)&=
 \int\limits_{\mathbb{R}_{+}}\,e_{s}^{-}(t,\tau)\phi(\tau)\,d\tau,
 \end{alignat}
 \end{subequations}
 \begin{lemma}
 \label{PrTrT}
 If \(\phi(\tau)\in{}L^1(\mathbb{R}_{+})\), and \(x(t)=(\boldsymbol{\mathscr{T}\phi)}(t)\),
 where \(\boldsymbol{\mathscr{T}}\) is any of the above--introduced four transformations
\(\boldsymbol{\mathscr{T}}_{c,s}^{\pm}\), then the function \(x(t)\) is continuous on the
interval \((0,\infty)\) and the estimate
\begin{equation}
\label{EstL1}
|x(t)|\leq\tfrac{1}{\sqrt{\pi}}\|\phi\|_{_{L^1(\mathbb{R}_{+})}}\cdot{}t^{-\frac{1}{2}},\quad 0<t<\infty,
\end{equation}
holds.
 \end{lemma}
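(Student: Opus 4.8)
The plan is to estimate the integral in \eqref{MIT} directly using the explicit formulas \eqref{ec}--\eqref{es} for the kernels $e_{c,s}^{\pm}(t,\tau)$ together with the fact that the phase factors $c(\tau),s(\tau)$ in \eqref{etau} are unimodular. First I would observe that in all four cases the kernel has the form
\begin{equation*}
e_{c,s}^{\pm}(t,\tau)=\tfrac{1}{2\sqrt{\pi}}\,t^{-\frac12}\Bigl(t^{-i\tau}g(\tau)\pm t^{i\tau}g(-\tau)\Bigr)\quad\text{or}\quad \tfrac{1}{2i\sqrt{\pi}}\,t^{-\frac12}\bigl(\cdots\bigr),
\end{equation*}
where $g$ is $c$ or $s$; in particular, since $|t^{\pm i\tau}|=1$ and $|g(\pm\tau)|=1$, we get the pointwise bound $|e_{c,s}^{\pm}(t,\tau)|\le \tfrac{1}{2\sqrt{\pi}}\cdot 2\cdot t^{-1/2}=\tfrac{1}{\sqrt{\pi}}\,t^{-1/2}$ uniformly in $\tau\in(0,\infty)$. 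Plugging this into $x(t)=\int_{\mathbb{R}_+}e_{c,s}^{\pm}(t,\tau)\phi(\tau)\,d\tau$ and taking absolute values inside the integral immediately yields $|x(t)|\le \tfrac{1}{\sqrt{\pi}}\,t^{-1/2}\int_{\mathbb{R}_+}|\phi(\tau)|\,d\tau=\tfrac{1}{\sqrt{\pi}}\|\phi\|_{L^1(\mathbb{R}_+)}\,t^{-1/2}$, which is exactly \eqref{EstL1}.

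For the continuity assertion I would fix $t_0\in(0,\infty)$ and a compact neighbourhood $[t_1,t_2]\subset(0,\infty)$ of it, and show that $t\mapsto e_{c,s}^{\pm}(t,\tau)$ is continuous in $t$ for each $\tau$ (obvious, as it is a finite combination of the continuous functions $t^{-1/2\pm i\tau}$) and is dominated on $[t_1,t_2]\times\mathbb{R}_+$ by the $\tau$-independent, $t$-uniform majorant $\tfrac{1}{\sqrt{\pi}}t_1^{-1/2}|\phi(\tau)|$, which lies in $L^1(\mathbb{R}_+)$ because $\phi\in L^1(\mathbb{R}_+)$. Continuity of $x$ at $t_0$ then follows from the dominated convergence theorem applied along any sequence $t_n\to t_0$ in $[t_1,t_2]$; since $t_0$ was arbitrary, $x$ is continuous on all of $(0,\infty)$.

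There is essentially no serious obstacle here: the only point that requires a word of care is the uniform bound $|g(\pm\tau)|=1$ for the phase factors, which is guaranteed by the identity $\exp\{i\arg\Gamma(\zeta)\}=\Gamma(\zeta)/|\Gamma(\zeta)|$ recorded just after \eqref{etau}, valid since $\Gamma(\tfrac14+i\tfrac{\tau}{2})$ and $\Gamma(\tfrac34+i\tfrac{\tau}{2})$ are nonzero (the Gamma function has no zeros). One should also note that the factor $\tfrac{1}{2i\sqrt{\pi}}$ in the "$-$" cases has modulus $\tfrac{1}{2\sqrt{\pi}}$, so the same constant $\tfrac{1}{\sqrt{\pi}}$ works in all four cases, and that the integrals defining $x(t)$ converge absolutely for each $t$ precisely because of this pointwise kernel bound together with $\phi\in L^1(\mathbb{R}_+)$. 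I would write the argument once, uniformly, with $e$ standing for any of $e_{c,s}^{\pm}$, rather than repeating it four times.
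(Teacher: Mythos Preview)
Your argument is correct and follows exactly the paper's approach: the paper establishes the same pointwise kernel bound \(|e(t,\tau)|\le \tfrac{1}{\sqrt{\pi}}t^{-1/2}\) (its estimate \eqref{EstFe}) and then simply invokes ``standard results of Lebesgue integration theory''. You have merely spelled out those standard results---the triangle inequality for the estimate and dominated convergence for continuity---and justified the kernel bound from the unimodularity of the phase factors, which the paper leaves implicit.
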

\begin{proof}
Let  \(e(t,\tau)\) be any of the four above--introduced functions \(e_{c}^{+}(t,\tau)\),
\(e_{c}^{-}(t,\tau)\), \(e_{s}^{+}(t,\tau)\),
\(e_{s}^{-}(t,\tau)\).
The function \(e(t,\tau)\) is continuous with respect to \(t\) at each \(t>0,\tau>0\) and satisfies
the estimate
\begin{equation}
\label{EstFe}
|e(t,\tau|\leq\tfrac{1}{\sqrt{\pi}}t^{-\frac{1}{2}},\quad 0<t<\infty,\,0<\tau<\infty.
\end{equation}
Now Lemma \ref{PrTrT} is a consequence of standard results of Lebesgue integration theory.
\end{proof}
\begin{theorem}
\label{EFBS}
Let \(\phi(\tau)\) be a  function,
satisfying the condition
\begin{equation}
\label{RsCoph}
\int\limits_{0}^{\infty}|\phi(\tau)|e^{\frac{\pi}{2}\tau}\,d\tau<\infty.
\end{equation}
and
\begin{alignat}{2}
\label{ATto}
x_{c}^{+}(t)&=(\boldsymbol{\mathscr{T}}_{c}^{+}\phi)(t), \quad & x_{c}^{-}(t)&=(\boldsymbol{\mathscr{T}}_{c}^{-}\phi)(t),\\
x_{s}^{+}(t)&=(\boldsymbol{\mathscr{T}}_{s}^{+}\phi)(t), \quad & x_{s}^{-}(t)&=(\boldsymbol{\mathscr{T}}_{c}^{-}\phi)(t),
\end{alignat}
Then the functions \(x_{c}^{+}(t),\,x_{c}^{-}(t)\) are eigenfunctions \textup{(}in the broad sense\textup{)} of the cosine transform \(\boldsymbol{\mathscr{C}}\) and the functions
\(x_{s}^{+}(t),\,x_{s}^{-}(t)\) are eigenfunctions \textup{(}in the broad sense\textup{)} of the sine transform \(\boldsymbol{\mathscr{S}}\), i.e.
\begin{subequations}
\label{EFiBSc}
\begin{align}
\label{EFiBScp}
x_{c}^{+}(t)&=\phantom{-}
\lim_{R\to\infty}\sqrt{\tfrac{2}{\pi}}\int\limits_{0}^{R}\cos(t\xi)\,x_{c}^{+}(\xi)\,d\xi,\\
\label{EFiBScm}
x_{c}^{-}(t)&=-
\lim_{R\to\infty}\sqrt{\tfrac{2}{\pi}}\int\limits_{0}^{R}\cos(t\xi)\,x_{c}^{-}(\xi)\,d\xi\,.
\end{align}
\end{subequations}
\vspace{-2.0ex}
and
\vspace{-2.0ex}
\begin{subequations}
\label{EFiBSs}
\begin{align}
\label{EFiBSsp}
x_{s}^{+}(t)&=\phantom{-}
\lim_{R\to\infty}\sqrt{\tfrac{2}{\pi}}\int\limits_{0}^{R}\sin(t\xi)\,x_{s}^{+}(\xi)\,d\xi,\\
\label{EFiBSsm}
x_{s}^{-}(t)&=-
\lim_{R\to\infty}\sqrt{\tfrac{2}{\pi}}\int\limits_{0}^{R}\sin(t\xi)\,x_{s}^{-}(\xi)\,d\xi\,.
\end{align}
\end{subequations}
for every \(t\in(0,\infty)\).
In particular, in \eqref{EFiBSc}, \eqref{EFiBSs}   the limits  exist.
\end{theorem}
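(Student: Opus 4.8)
The plan is to regard $x_c^+=\boldsymbol{\mathscr{T}}_c^+\phi$ as a continuous superposition of the individual broad--sense eigenfunctions $e_c^+(\cdot,\tau)$ and to interchange the defining limit of the cosine transform with the $\tau$--integration by dominated convergence; the remaining three cases are then treated identically, replacing $\cos$ by $\sin$ where appropriate and invoking the corresponding clause of Theorem~\ref{GeEig} (the numerical prefactors $\tfrac{1}{2\sqrt\pi}$, $\tfrac{1}{2i\sqrt\pi}$ in \eqref{eFE} play no role in the eigenvalue equation).

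\emph{Truncation, Fubini, and the pointwise limit.} Fix $t>0$ and $R\in(0,\infty)$. By the bound \eqref{EstFe}, $|\cos(t\xi)\,e_c^+(\xi,\tau)\,\phi(\tau)|\le\tfrac{1}{\sqrt\pi}\,\xi^{-1/2}\,|\phi(\tau)|$ on $(0,R)\times\mathbb{R}_+$; since $\int_0^R\xi^{-1/2}\,d\xi<\infty$ and \eqref{RsCoph} forces $\phi\in L^1(\mathbb{R}_+)$ (because $e^{\pi\tau/2}\ge1$), Fubini's theorem gives
\[
\sqrt{\tfrac2\pi}\int_0^R\cos(t\xi)\,x_c^+(\xi)\,d\xi=\int_{\mathbb{R}_+}\phi(\tau)\,\Phi_R(t,\tau)\,d\tau,\qquad
\Phi_R(t,\tau):=\sqrt{\tfrac2\pi}\int_0^R\cos(t\xi)\,e_c^+(\xi,\tau)\,d\xi .
\]
By part~1 of Theorem~\ref{GeEig} applied with $a=\tfrac12+i\tau$, together with the definition \eqref{eFEc}, one has $\lim_{R\to\infty}\Phi_R(t,\tau)=e_c^+(t,\tau)$ for all $t,\tau>0$.

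\emph{A majorant uniform in $R$ — the crucial step.} Insert the representation \eqref{ecp}, namely $e_c^+(\xi,\tau)=\tfrac{1}{2\sqrt\pi}\bigl(c(\tau)\xi^{-1/2-i\tau}+c(-\tau)\xi^{-1/2+i\tau}\bigr)$ with $|c(\pm\tau)|=1$, and substitute $s=t\xi$; then $\Phi_R(t,\tau)$ becomes a fixed linear combination, with coefficients of modulus $t^{-1/2}$, of the two integrals $\int_0^{tR}(\cos s)\,s^{\zeta-1}\,ds$ with $\zeta=\tfrac12\pm i\tau$. Each of these has $\mathrm{Re}\,\zeta=\tfrac12$ and $|\mathrm{Im}\,\zeta|=\tau$, so the estimate \eqref{Ewrta1} of Lemma~\ref{CCI} (with $\delta=\tfrac14$) bounds its modulus by $C\,e^{\frac\pi2\tau}$ uniformly in $R$. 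Hence
\[
|\Phi_R(t,\tau)|\le C\,t^{-1/2}\,e^{\frac\pi2\tau}\qquad(0<t<\infty,\ 0<\tau<\infty,\ 0<R<\infty),
\]
so $|\phi(\tau)\,\Phi_R(t,\tau)|\le C\,t^{-1/2}\,e^{\frac\pi2\tau}|\phi(\tau)|$, which for each fixed $t$ is an $L^1(\mathbb{R}_+)$ function of $\tau$ independent of $R$, by hypothesis \eqref{RsCoph}.

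\emph{Conclusion.} Letting $R\to\infty$ in the displayed identity and invoking dominated convergence with the majorant just produced and the pointwise limit of $\Phi_R$, we obtain that the limit in \eqref{EFiBScp} exists and equals $\int_{\mathbb{R}_+}\phi(\tau)\,e_c^+(t,\tau)\,d\tau=x_c^+(t)$; thus $x_c^+$ is a broad--sense eigenfunction of $\boldsymbol{\mathscr{C}}$ for the eigenvalue $+1$. Replacing part~1 of Theorem~\ref{GeEig} by its second line gives the eigenvalue $-1$ for $x_c^-$, and running the whole argument with $\sin$ in place of $\cos$, $e_s^\pm$ in place of $e_c^\pm$, and the representations \eqref{es} yields the two sine statements. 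The only non--routine ingredient is the $R$--uniform exponential bound above; the weight $e^{\frac\pi2\tau}$ in \eqref{RsCoph} is dictated precisely by the factor $e^{\frac\pi2|\mathrm{Im}\,\zeta|}$ in Lemma~\ref{CCI}, and without it the interchange of limit and integral need not hold.
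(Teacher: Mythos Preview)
Your proof is correct and follows essentially the same route as the paper: Fubini for the truncated integral (justified via the bound \eqref{EstFe}), the pointwise limit from Theorem~\ref{GeEig}, and dominated convergence using the uniform estimate \eqref{Ewrta1} together with the weighted integrability \eqref{RsCoph}. The only difference is organizational---you apply Fubini first and then pass to the limit, whereas the paper writes $x_c^+(t)$ as an integral of limits, interchanges limit and integral, and then applies Fubini---but the ingredients and their roles are identical.
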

\begin{proof} According to Theorem \ref{GeEig} and \eqref{eFE},
\begin{equation*}
e_{c}^{+}(t,\tau)=\lim_{R\to\infty}\sqrt{\tfrac{2}{\pi}}\int\limits_{0}^{R}\cos(t\xi)e_{c}^{+}(\xi,\tau)\,d\xi\ \
\textup{for every}\ t,\,\tau.
\end{equation*}
Multiplying by \(\phi(\tau)\) and integrating with respect to \(\tau\), we obtain
\begin{equation*}
x_{c}^{+}(t)=\sqrt{\tfrac{2}{\pi}}%
\int\limits_{0}^{\infty}\bigg(\lim_{R\to\infty}%
\int\limits_{0}^{R}\cos(t\xi)e_{c}^{+}(\xi,\tau)\,d\xi\bigg)\phi(\tau)\,d\tau.
\end{equation*}
From \eqref{Ewrta} we obtain the estimate
\[\bigg|\int\limits_{0}^{R}\cos(t\xi)e_{c}^{+}(\xi,\tau)\,d\xi\bigg|\leq{}%
Ct^{-\frac{1}{2}}e^{\frac{\pi}{2}\tau},\ \
\forall\,\,R<\infty,\,\tau\in\mathbb{R}_+,\,t\in\mathbb{R}_+,\]
 where the value \(C<\infty\) does not depend on \(R,\,\tau,\,t\).
 This estimate and condition \eqref{RsCoph} for the function \(\phi(t)\) allow
us to apply the Lebesgue dominated convergence theorem:
\begin{multline}
\label{RPaL1}
\int\limits_{0}^{\infty}\bigg(\lim_{R\to\infty}
\int\limits_{0}^{R}\cos(t\xi)e_{c}^{+}(\xi,\tau)\,d\xi\bigg)\phi(\tau)\,d\tau=\\
\lim_{R\to\infty}%
\int\limits_{0}^{\infty}\bigg(
\int\limits_{0}^{R}\cos(t\xi)e_{c}^{+}(\xi,\tau)\,d\xi\bigg)\phi(\tau)\,d\tau.
\end{multline}
Thus,
\[x_{c}^{+}(t)=\lim_{R\to\infty}\sqrt{\tfrac{2}{\pi}}%
\int\limits_{0}^{\infty}\bigg(
\int\limits_{0}^{R}\cos(t\xi)e_{c}^{+}(\xi,\tau)\,d\xi\bigg)\phi(\tau)\,d\tau.\]
On the other hand, using the estimate \eqref{EstFe} for \(e_{c}^{+}(\xi,\tau)\), we can justify the change of order of integration in the series integral which appears on the right--hand side of the above equality. For any finite \(R\),
\begin{multline*}
\int\limits_{0}^{\infty}\bigg(
\int\limits_{0}^{R}\cos(t\xi)e_{c}^{+}(\xi,\tau)\,d\xi\bigg)\phi(\tau)\,d\tau=\\
\int\limits_{0}^{R}\cos(t\xi)%
\bigg(\int\limits_{0}^{\infty}e_{c}^{+}(\xi,\tau)\phi(\tau)\,d\tau\bigg)\,d\xi=
\int\limits_{0}^{R}\cos(t\xi)x_{c}^{+}(\xi)\,d\xi.
\end{multline*}
Finally, we obtain the equality
\begin{math}
x_{c}^{+}(t)=\lim_{R\to\infty}\int\limits_{0}^{R}\cos(t\xi)x_{c}^{+}(\xi)\,d\xi,
\end{math}
i.e. the equality \eqref{EFiBScp} for the function \(x_{c}^{+}\). The equality
\eqref{EFiBScm} for the function \(x_{c}^{-}\) and the equalities \eqref{EFiBSs} for the functions \(x_{s}^{+},\,x_{s}^{+}\) can be obtained analogously.
\end{proof}
\begin{remark}
\label{WhF}
In \textup{Theorem \ref{EFBS}} we assume that the function \(\phi\) satisfies condition \eqref{RsCoph}.
Assuming only that \(\int\limits_{0}^{\infty}|\phi(\tau)|\,d\tau<\infty\), we can not justify
the equality \eqref{RPaL1}.
 To apply the Lebesgue dominated convergence theorem,
we need the estimate
\[\sup_{\substack{R\in(0,\infty)\\{}\tau\in(-\infty,\infty)}}
\bigg|\int\limits_{0}^{R}(\cos\xi)\cdot\xi^{-\tfrac{1}{2}+i\tau}\,d\xi\bigg|<\infty .\]
We are, however, able to establish \eqref{Ewrta}, but this estimate is not strong enough.

\textsf{\textsl{The question}} of whether the equalities \eqref{EFiBSc}, \eqref{EFiBSs} hold under the assumption
\(\int\limits_{0}^{\infty}|\phi(\tau)|\,d\tau<\infty\) \textsl{\textsf{remains open.}}
\end{remark}

\noindent
\textbf{7.}\hspace{1.0ex} Our considerations in the context of \(L^2\)-theory on the operators
\(\boldsymbol{\mathscr{C}}\) and \(\boldsymbol{\mathscr{S}}\) are based on \(L^2\)-theory for the Melline transform.
(See the article "Melline Transform" on page 192 of \cite[Volume 6]{7} and
references there.)
 The Melline transform \(\boldsymbol{\mathscr{M}}\) is defined by
 \begin{equation*}
 (\boldsymbol{\mathscr{M}}f)(\zeta)=\int\limits_{0}^{\infty}f(t)t^{\zeta-1}\,dt.
 \end{equation*}
If the function \(f(t)\in{}L^2(\mathbb{R}_{+})\) is \emph{compactly} supported in the
 \emph{open} interval \((0,\infty)\), then the function \(\Phi(\zeta)= (\boldsymbol{\mathscr{M}}f)(\zeta)\) of variable \(\zeta\) is defined in the whole complex
 \(\zeta\)-plane and holomorphic there. The function \(f(t)\) can be recovered from the
 function  \(\Phi= \boldsymbol{\mathscr{M}}f\) by the formula
 \[f(t)=\tfrac{1}{2\pi{}i}\int\limits_{\textup{Re}\,\zeta=c}\Phi(\zeta)\,t^{-\zeta}\,d\zeta,\]
 where \(c\) is an arbitrary real number. Moreover, the Parseval equality
 \[\int\limits_{0}^{\infty}|f(t)|^2dt=
 \frac{1}{2\pi}\int\limits_{\textup{Re}\,\zeta=\frac{1}{2}}|\Phi(\zeta)|^{2}\,|d\zeta|\]
 holds (from which we recognize the significance of the vertical line \(\textup{Re}\,\zeta=\tfrac{1}{2}\)).
 Thus the Melline transform \(\boldsymbol{\mathscr{M}}\) generates the linear operator defined
 on the set of all compactly supported functions \(f\) from \(L^2(\mathbb{R}_{+})\) which
 maps this set isometrically into the space \(L^2\big(\textup{Re}\,\zeta=\tfrac{1}{2}\big)\) of
 functions defined on the vertical line \(\textup{Re}\,\zeta=\tfrac{1}{2}\) and which are square--integrable there. Since the set of all compactly supported functions \(f\) is dense in \(L^2(\mathbb{R}_{+})\),
 this operator can be extended to an isometrical operator defined on the
 whole  \(L^2(\mathbb{R}_{+})\) which maps \(L^2(\mathbb{R}_{+})\) isometrically into \(L^2\big(\textup{Re}\,\zeta=\tfrac{1}{2}\big)\). We will continue to denote this extended operator by \(\boldsymbol{\mathscr{M}}\).

 It turns out that the operator \(\boldsymbol{\mathscr{M}}\) maps
 the space \(L^2(\mathbb{R}_{+})\) \emph{onto} the whole space   \(L^2\big(\textup{Re}\,\zeta=\tfrac{1}{2}\big)\).
  The inverse operator \(\boldsymbol{\mathscr{M}}^{-1}\) is defined \emph{everywhere} on \(L^2\big(\textup{Re}\,\zeta=\tfrac{1}{2}\big)\). If \(\Phi\in{}L^2\big(\textup{Re}\,\zeta=\tfrac{1}{2}\big)\),
then the function \[f(t)=(\boldsymbol{\mathscr{M}}^{-1}\Phi)(t)\] is defined as an
\(L^2(\mathbb{R}_{+})\)-function and can be expressed as
\begin{subequations}
\label{Mel}
\begin{equation}
\label{MelI}
f(t)=\tfrac{1}{2\pi}\int_{-\infty}^{\infty}\Phi\big(\tfrac{1}{2}+i\tau\big)
\,t^{-\frac{1}{2}-i\tau}d\tau,\ \ 0<t<\infty.
\end{equation}
Furthermore, the function
\[\Phi\big(\tfrac{1}{2}+i\tau\big)=(\boldsymbol{\mathscr{M}}f)(\tfrac{1}{2}+i\tau\big)\]
can be expressed as
 \begin{equation}
 \label{MelD}
 \Phi\big(\tfrac{1}{2}+i\tau\big)=\int\limits_{0}^{\infty}f(t)\,t^{-\frac{1}{2}+i\tau}dt,
 \ \ -\infty<\tau<\infty.
 \end{equation}
 The pair of formulas \eqref{MelI} and \eqref{MelD} together with the Parseval equality
 \begin{equation}
 \label{MelP}
 \int\limits_{0}^{\infty}|f(t)|^2dt=\tfrac{1}{2\pi}\int\limits_{-\infty}^{\infty}
 \big|\Phi\big(\tfrac{1}{2}+i\tau\big)\big|^2\,d\tau
 \end{equation}
 make up the most import part of the \(L^2\)-theory of Melline transform.
\end{subequations}{\ }\\

\noindent
\textbf{8.} Developing \(L^2\)-theory of the cosine and sine transforms, we first of
all prove
 \begin{lemma}
 \label{BMO}
 Let \(\phi(t)\in{}L^1(\mathbb{R}_{+})\cap{}L^2(\mathbb{R}_{+})\).
 Then
 \begin{equation}
\label{L2Co}
\int\limits_{\mathbb{R}_{+}}|(\boldsymbol{\mathscr{T}}\phi)(t)|^2dt=
\int\limits_{\mathbb{R}_{+}}|\phi(\tau)|^2d\tau\,,
\end{equation}
where \(\boldsymbol{\mathscr{T}}\) is any of the above--introduced \textup{(}see \eqref{MIT}\textup{)} four transformations
\(\boldsymbol{\mathscr{T}}_{c,s}^{\pm}\).
 \end{lemma}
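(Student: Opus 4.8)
The plan is to reduce everything to the Parseval identity for the Mellin transform, equation~\eqref{MelP}. The starting point is the explicit representation of the kernels $e_{c,s}^{\pm}(t,\tau)$ in terms of powers $t^{-\frac12\pm i\tau}$ given in \eqref{ec}--\eqref{es}: each kernel is a fixed linear combination of $t^{-\frac12-i\tau}g(\tau)$ and $t^{-\frac12+i\tau}g(-\tau)$ with $g\in\{c,s\}$ a unimodular phase factor. Substituting, say, \eqref{ecp} into the definition \eqref{MITp} of $\boldsymbol{\mathscr{T}}_c^{+}$ and splitting the integral, I would write
\[
(\boldsymbol{\mathscr{T}}_c^{+}\phi)(t)=\tfrac{1}{2\pi}\int_{-\infty}^{\infty}\Psi\big(\tfrac12+i\tau\big)\,t^{-\frac12-i\tau}\,d\tau,
\]
where $\Psi(\tfrac12+i\tau)$ is obtained from $\phi$ by the substitutions $\tau\mapsto-\tau$ in one of the two terms together with multiplication by the phase factors $c(\pm\tau)$; concretely $\Psi(\tfrac12+i\tau)=\sqrt{\pi}\big(c(\tau)\phi(\tau)+c(-\tau)\phi(-\tau)\big)$ once $\phi$ is extended appropriately, or — respecting Remark~\ref{posi} that $\tau$ runs only over $(0,\infty)$ — $\Psi(\tfrac12+i\tau)=\sqrt{\pi}\,c(\tau)\phi(|\tau|)$ for the "$+$" kernels and with a sign for the "$-$" kernels. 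The key recognition is that the right-hand side is exactly the inverse Mellin transform \eqref{MelI} of the function $\Psi$.

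Given that, the proof is two lines: by \eqref{MelP},
\[
\int_{\mathbb{R}_+}|(\boldsymbol{\mathscr{T}}_c^{+}\phi)(t)|^2\,dt=\tfrac{1}{2\pi}\int_{-\infty}^{\infty}\big|\Psi\big(\tfrac12+i\tau\big)\big|^2\,d\tau,
\]
and then one computes the right-hand side directly. Because $|c(\tau)|=|c(-\tau)|=1$ and $c(\tau)=\overline{c(-\tau)}$, the cross terms arising from $|c(\tau)\phi(\tau)+c(-\tau)\phi(-\tau)|^2$ either cancel against the corresponding terms from the companion "$-$" transform or integrate to a quantity that, after the change of variable $\tau\mapsto-\tau$ on half the domain, collapses to $2\int_0^{\infty}|\phi(\tau)|^2\,d\tau$; the factor $\frac{1}{2\pi}$ together with the $\sqrt{\pi}$ and the $\frac{1}{2\sqrt\pi}$ normalisations in \eqref{ec}--\eqref{es} is exactly tuned so that the final constant is $1$, giving \eqref{L2Co}. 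The same computation works verbatim for $\boldsymbol{\mathscr{T}}_c^{-}$ (the $\frac{1}{2i\sqrt\pi}$ and the minus sign in \eqref{ecm} change the phases but not the moduli) and for $\boldsymbol{\mathscr{T}}_s^{\pm}$ with $c$ replaced by $s$.

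The one point requiring genuine care — and what I expect to be the main obstacle — is the bookkeeping of the domain of $\tau$ and of the phase factors, i.e. making sure that the function $\Psi$ I feed into \eqref{MelI} is a legitimate element of $L^2(\mathrm{Re}\,\zeta=\tfrac12)$ and that no constant or sign is lost. Since $\phi\in L^1\cap L^2(\mathbb{R}_+)$, the expression $c(\tau)\phi(|\tau|)$ (suitably extended to $\tau<0$) lies in $L^2(-\infty,\infty)$, so \eqref{MelI} applies and the left-hand side of \eqref{L2Co} is finite; the $L^1$ hypothesis guarantees, via Lemma~\ref{PrTrT}, that $(\boldsymbol{\mathscr{T}}\phi)(t)$ is the genuine pointwise integral and not merely an $L^2$-limit, so the identification with $\boldsymbol{\mathscr{M}}^{-1}\Psi$ is literal rather than almost-everywhere. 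Once the parity of $c$ and $s$ is used to handle the two exponentials $t^{-\frac12\pm i\tau}$ symmetrically, the constant comes out right and the lemma follows.
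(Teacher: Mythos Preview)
Your approach is essentially the same as the paper's: identify \((\boldsymbol{\mathscr{T}}_{c,s}^{\pm}\phi)(t)\) as the inverse Mellin transform of \(\Psi(\tfrac12+i\tau)=\sqrt{\pi}\,g(\tau)\phi(|\tau|)\) (times \(\tfrac{1}{i}\,\textup{sign}\,\tau\) in the ``\(-\)'' cases), with \(g\in\{c,s\}\), and apply the Parseval equality \eqref{MelP}. Your worry about cross terms is a red herring---once \(\Psi\) is written in the \(\phi(|\tau|)\) form there are no cross terms, since \(|\Psi(\tfrac12+i\tau)|^2=\pi|\phi(|\tau|)|^2\) directly (the unimodularity \(|c(\tau)|=|s(\tau)|=1\) is all that is used).
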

\begin{proof}
The proof is based on the Parseval equality  for the Melline transform.
We present the transformations \(\boldsymbol{\mathscr{T}}_{c,s}^{\pm}\) as inverse Melline
transforms. Given a function \(\phi(\tau)\) defined for \(\tau\in(0,\infty)\), we
introduce the functions
\begin{subequations}
\label{RedToMelc}
\begin{alignat}{2}
\label{RedToMelcp}
\Phi_{c}^{+}\big(\tfrac{1}{2}+i\tau\big)&=&\phantom{\textup{sign}\,(\tau)}&
\sqrt{\pi}\,c(\tau)\,\phi(|\tau|),\\
\label{RedToMelcm}
\Phi_{c}^{-}\big(\tfrac{1}{2}+i\tau\big)&=&\tfrac{1}{i}\,\textup{sign}\,(\tau)&
\sqrt{\pi}\,c(\tau)\,\phi(|\tau|),
\end{alignat}
\end{subequations}
and
\begin{subequations}
\label{RedToMels}
\begin{alignat}{2}
\label{RedToMelsp}
\Phi_{s}^{+}\big(\tfrac{1}{2}+i\tau\big)&=&\phantom{\textup{sign}\,(\tau)}&
\sqrt{\pi}\,s(\tau)\,\phi(|\tau|),\\
\label{RedToMelsm}
\Phi_{s}^{-}\big(\tfrac{1}{2}+i\tau\big)&=&\tfrac{1}{i}\,\textup{sign}\,(\tau)&
\sqrt{\pi}\,s(\tau)\,\phi(|\tau|),
\end{alignat}
\end{subequations}
\emph{which  are defined for} \(\tau\in(-\infty,\infty)\). Here \(c(\tau),\,s(\tau)\) are
the "phase factors" introduced in \eqref{etau}.
It is clear that \(\big|\Phi\big(\tfrac{1}{2}+i\tau\big)\big|=\sqrt{\pi}\phi(|\tau)|\), thus
\[\int\limits_{-\infty}^{\infty}\big|\Phi\big(\tfrac{1}{2}+i\tau\big)\big|^2\,d\tau=
2\pi\int\limits_{0}^{\infty}|\phi(\tau)|^2d\tau,\]
where \(\Phi\) is any of the four functions \(\Phi_{c}^{+}\), \(\Phi_{c}^{-}\),
\(\Phi_{s}^{+}\), \(\Phi_{s}^{-}\). Comparing \eqref{MITp}, \eqref{ecp} and \eqref{RedToMelcp},
we see that the function \((\boldsymbol{\mathscr{T}}_{c}^{+}\phi)(t)\)
can be interpreted as the inverse Melline transform of the function \(\Phi_{c}^{+}\):
\begin{subequations}
\label{Inte}
\begin{equation}
\label{Intecp}
(\boldsymbol{\mathscr{T}}_{c}^{+}\phi)(t)=
\tfrac{1}{2\pi}\int\limits_{-\infty}^{\infty}
t^{-\frac{1}{2}-i\tau}\Phi_{c}^{+}\big(\tfrac{1}{2}+i\tau\big)\,d\tau.
\end{equation}
The Parseval equality transform,
as applied to the inverse Melline transform of the function \(\varphi_{c}^{+}(\tau)\), yields:
\begin{equation*}
\int\limits_{0}^{\infty}|(\boldsymbol{\mathscr{T}}_{c}^{+}\phi)(t)|^2dt
=\tfrac{1}{2\pi}\int\limits_{-\infty}^{\infty}
\big|\Phi_{c}^{+}\big(\tfrac{1}{2}+i\tau\big)\big|^2d\tau=
\int\limits_{0}^{\infty}|\phi(\tau)|^2d\tau.
\end{equation*}
This is equality \eqref{L2Co} for the transform \(\boldsymbol{\mathscr{T}}_{c}^{+}\).

The functions \(\boldsymbol{\mathscr{T}}_{c}^{-}\phi\),  \(\boldsymbol{\mathscr{T}}_{s}^{+}\phi\),
 \(\boldsymbol{\mathscr{T}}_{s}^{-}\phi\) can also be interpreted as inverse Melline transforms:
\begin{equation}
\label{Intecm}
(\boldsymbol{\mathscr{T}}_{c}^{-}\phi)(t)=
\tfrac{1}{2\pi}\int\limits_{-\infty}^{\infty}
t^{-\frac{1}{2}-i\tau}\Phi_{c}^{-}\big(\tfrac{1}{2}+i\tau\big)\,d\tau,
\end{equation}
\end{subequations}
and
\begin{subequations}
\label{Intes}
\begin{align}
\label{Intesp}
(\boldsymbol{\mathscr{T}}_{s}^{+}\phi)(t)=
\tfrac{1}{2\pi}\int\limits_{-\infty}^{\infty}
t^{-\frac{1}{2}-i\tau}\Phi_{s}^{+}\big(\tfrac{1}{2}+i\tau\big)\,d\tau,\\
\label{Intesm}
(\boldsymbol{\mathscr{T}}_{s}^{-}\phi)(t)=
\tfrac{1}{2\pi}\int\limits_{-\infty}^{\infty}
t^{-\frac{1}{2}-i\tau}\Phi_{s}^{-}\big(\tfrac{1}{2}+i\tau\big)\,d\tau.
\end{align}
\end{subequations}
The Parseval equalities, as applied to the inverse Melline transform of the functions
\(\Phi_{c}^{-}\), \(\Phi_{s}^{+}\) and \(\Phi_{s}^{-}\), yield
the equalities \eqref{L2Co} for the transforms \(\boldsymbol{\mathscr{T}}_{c}^{-}\),
\(\boldsymbol{\mathscr{T}}_{s}^{+}\) and \(\boldsymbol{\mathscr{T}}_{s}^{-}\), respectively.
\end{proof}

\noindent
\textbf{9.}\hspace{1.0ex}
According to Lemma \ref{BMO}, the operators \(\boldsymbol{\mathscr{T}}_{c}^{+}\),
\(\boldsymbol{\mathscr{T}}_{c}^{-}\), \(\boldsymbol{\mathscr{T}}_{s}^{+}\),
\(\boldsymbol{\mathscr{T}}_{s}^{-}\) are linear operators each of which is defined on the
linear manifold \(L^{1}(\mathbb{R}_{+})\cap{}L^{2}(\mathbb{R}_{+})\) of the Hilbert space \(L^{2}(\mathbb{R}_{+})\)
and which maps this linear manifold into \(L^{2}(\mathbb{R}_{+})\) \emph{isometrically}.
Since the set \(L^{1}(\mathbb{R}_{+})\cap{}L^{2}(\mathbb{R}_{+})\) is dense in \(L^{2}(\mathbb{R}_{+})\),   each of these operators can be extended to an operator
defined on the whole space  \(L^{2}(\mathbb{R}_{+})\),  which maps  \(L^{2}(\mathbb{R}_{+})\)
into \(L^{2}(\mathbb{R}_{+})\) isometrically. We will continue to write  \(\boldsymbol{\mathscr{T}}_{c}^{+}\),
\(\boldsymbol{\mathscr{T}}_{c}^{-}\), \(\boldsymbol{\mathscr{T}}_{s}^{+}\)
and \(\boldsymbol{\mathscr{T}}_{s}^{-}\) for the extended operators.

We now consider the operators \(\boldsymbol{\mathscr{T}}_{c}^{+}\),
\(\boldsymbol{\mathscr{T}}_{c}^{-}\), \(\boldsymbol{\mathscr{T}}_{s}^{+}\),
\(\boldsymbol{\mathscr{T}}_{s}^{-}\) as operators defined on \emph{all} of
\(L^{2}(\mathbb{R}_{+})\), mapping \(L^{2}(\mathbb{R}_{+})\) into \(L^{2}(\mathbb{R}_{+})\)
isometrically and acting on the functions \(\phi(t)\in{}L^{1}(\mathbb{R}_{+})\cap{}L^{2}(\mathbb{R}_{+})\) according to \eqref{MIT}.
\begin{theorem} {\ }\\[-3.0ex]
\label{RepEig}
\begin{enumerate}
\item[\textup{1}.]
The range of values of the operator \(\boldsymbol{\mathscr{T}}_{c}^{+}\) is
the eigensubspace \(\mathcal{C}_{+1}\) of the operator \(\boldsymbol{\mathscr{C}}\);
\item[\textup{2}.]
The range of values of the operator \(\boldsymbol{\mathscr{T}}_{c}^{-}\) is
the eigensubspace \(\mathcal{C}_{-1}\) of the operator \(\boldsymbol{\mathscr{C}}\);
\item[\textup{3}.]
The range of values of the operator \(\boldsymbol{\mathscr{T}}_{s}^{+}\) is
the eigensubspace \(\mathcal{S}_{+1}\) of the operator \(\boldsymbol{\mathscr{S}}\);
\item[\textup{4}.]
The range of values of the operator \(\boldsymbol{\mathscr{T}}_{s}^{-}\) is
the eigensubspace \(\mathcal{S}_{-1}\) of the operator \(\boldsymbol{\mathscr{S}}\).
\end{enumerate}
\end{theorem}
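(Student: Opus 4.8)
The plan is to establish all four assertions together from two facts already in hand: Theorem~\ref{EFBS}, which says the operators $\boldsymbol{\mathscr{T}}_{c,s}^{\pm}$ produce eigenfunctions in the broad sense, and the Mellin representations \eqref{Intecp}, \eqref{Intecm}, \eqref{Intesp}, \eqref{Intesm} obtained inside the proof of Lemma~\ref{BMO}. I would treat the case of $\boldsymbol{\mathscr{T}}_{c}^{+}$ in detail; the remaining three are identical after replacing $c(\tau)$ by $s(\tau)$ and/or ``even'' by ``odd''. The argument has three steps.

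First I would prove $\operatorname{Range}(\boldsymbol{\mathscr{T}}_{c}^{+})\subseteq\mathcal{C}_{+1}$. Let $D$ be the set of $\phi\in L^{2}(\mathbb{R}_{+})$ with bounded support; then $D\subseteq L^{1}(\mathbb{R}_{+})\cap L^{2}(\mathbb{R}_{+})$, every $\phi\in D$ satisfies \eqref{RsCoph} (Cauchy--Schwarz), and $D$ is dense in $L^{2}(\mathbb{R}_{+})$. For $\phi\in D$ put $x:=\boldsymbol{\mathscr{T}}_{c}^{+}\phi$; Lemma~\ref{BMO} gives $x\in L^{2}(\mathbb{R}_{+})$, while Theorem~\ref{EFBS} gives $x(t)=\lim_{R\to\infty}\sqrt{\tfrac{2}{\pi}}\int_{0}^{R}\cos(t\xi)\,x(\xi)\,d\xi$ for every $t>0$. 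Since $x\cdot\chi_{[0,R]}\in L^{1}\cap L^{2}$ with $x\cdot\chi_{[0,R]}\to x$ in $L^{2}(\mathbb{R}_{+})$, these truncated integrals equal $\bigl(\boldsymbol{\mathscr{C}}(x\cdot\chi_{[0,R]})\bigr)(t)$ and, by continuity of the isometry $\boldsymbol{\mathscr{C}}$, converge to $\boldsymbol{\mathscr{C}}x$ in $L^{2}$-norm; passing to a subsequence that converges almost everywhere forces $\boldsymbol{\mathscr{C}}x=x$, i.e. $x\in\mathcal{C}_{+1}$. As $\boldsymbol{\mathscr{T}}_{c}^{+}$ is continuous, $D$ is dense, and $\mathcal{C}_{+1}$ is closed, this yields $\operatorname{Range}(\boldsymbol{\mathscr{T}}_{c}^{+})=\overline{\boldsymbol{\mathscr{T}}_{c}^{+}(D)}\subseteq\mathcal{C}_{+1}$, and in the same way $\operatorname{Range}(\boldsymbol{\mathscr{T}}_{c}^{-})\subseteq\mathcal{C}_{-1}$, $\operatorname{Range}(\boldsymbol{\mathscr{T}}_{s}^{+})\subseteq\mathcal{S}_{+1}$, $\operatorname{Range}(\boldsymbol{\mathscr{T}}_{s}^{-})\subseteq\mathcal{S}_{-1}$.

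Second I would compute the ranges exactly. By \eqref{Intecp} and \eqref{RedToMelcp}, on $L^{1}\cap L^{2}(\mathbb{R}_{+})$ we have $\boldsymbol{\mathscr{T}}_{c}^{+}=\boldsymbol{\mathscr{M}}^{-1}\circ J_{c}^{+}$ with $(J_{c}^{+}\phi)\bigl(\tfrac{1}{2}+i\tau\bigr)=\sqrt{\pi}\,c(\tau)\,\phi(|\tau|)$. Since $|c(\tau)|=1$ (immediate from \eqref{etau}) and $\phi\mapsto\phi(|\tau|)$ maps $L^{1}\cap L^{2}(\mathbb{R}_{+})$ onto a dense subset of the even functions in $L^{2}(\mathbb{R})$, the image $J_{c}^{+}(L^{1}\cap L^{2})$ is dense in $c(\tau)\cdot L^{2}_{\mathrm{even}}(\mathbb{R})$; as $\boldsymbol{\mathscr{T}}_{c}^{+}$ is an isometry with dense domain, its range is the closure of $\boldsymbol{\mathscr{T}}_{c}^{+}(L^{1}\cap L^{2})$, and $\boldsymbol{\mathscr{M}}^{-1}$ is a constant multiple of a unitary operator onto $L^{2}(\mathbb{R}_{+})$ (the Parseval identity \eqref{MelP} together with surjectivity of $\boldsymbol{\mathscr{M}}$), so $\operatorname{Range}(\boldsymbol{\mathscr{T}}_{c}^{+})=\boldsymbol{\mathscr{M}}^{-1}\bigl(c(\tau)\,L^{2}_{\mathrm{even}}(\mathbb{R})\bigr)$. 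In the same way \eqref{Intecm} and \eqref{RedToMelcm} give $\operatorname{Range}(\boldsymbol{\mathscr{T}}_{c}^{-})=\boldsymbol{\mathscr{M}}^{-1}\bigl(c(\tau)\,L^{2}_{\mathrm{odd}}(\mathbb{R})\bigr)$, since $\tfrac{1}{i}\operatorname{sign}(\tau)\,\phi(|\tau|)$ is odd. Because $L^{2}(\mathbb{R})=L^{2}_{\mathrm{even}}(\mathbb{R})\oplus L^{2}_{\mathrm{odd}}(\mathbb{R})$ orthogonally, multiplication by the unimodular $c(\tau)$ is unitary, and $\boldsymbol{\mathscr{M}}^{-1}$ is (up to a constant) unitary onto $L^{2}(\mathbb{R}_{+})$, it follows that $\operatorname{Range}(\boldsymbol{\mathscr{T}}_{c}^{+})\oplus\operatorname{Range}(\boldsymbol{\mathscr{T}}_{c}^{-})=L^{2}(\mathbb{R}_{+})$, an orthogonal direct sum; the analogue with $s(\tau)$ in place of $c(\tau)$ gives $\operatorname{Range}(\boldsymbol{\mathscr{T}}_{s}^{+})\oplus\operatorname{Range}(\boldsymbol{\mathscr{T}}_{s}^{-})=L^{2}(\mathbb{R}_{+})$.

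Third I would combine the two steps with the decomposition $L^{2}(\mathbb{R}_{+})=\mathcal{C}_{+1}\oplus\mathcal{C}_{-1}$ from \eqref{OrDe}. Given $v\in\mathcal{C}_{+1}$, write $v=a_{+}+a_{-}$ with $a_{\pm}\in\operatorname{Range}(\boldsymbol{\mathscr{T}}_{c}^{\pm})\subseteq\mathcal{C}_{\pm1}$ (the first step gives the inclusions, the second the decomposition); then $a_{-}=v-a_{+}\in\mathcal{C}_{+1}\cap\mathcal{C}_{-1}=\{0\}$, so $v=a_{+}\in\operatorname{Range}(\boldsymbol{\mathscr{T}}_{c}^{+})$. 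Hence $\mathcal{C}_{+1}\subseteq\operatorname{Range}(\boldsymbol{\mathscr{T}}_{c}^{+})$, which with the first step gives $\operatorname{Range}(\boldsymbol{\mathscr{T}}_{c}^{+})=\mathcal{C}_{+1}$; the identical reasoning applied to the three other pairs proves assertions 2, 3 and 4. I expect the one genuinely delicate point to be the upgrade, in the first step, of an $L^{2}$-function that is a broad-sense eigenfunction to a true $L^{2}$-eigenvector of $\boldsymbol{\mathscr{C}}$ (respectively $\boldsymbol{\mathscr{S}}$), that is, matching the pointwise limit of Theorem~\ref{EFBS} with the $L^{2}$-limit that defines the extended operator; this is precisely the subsequence argument sketched above, and everything else is routine bookkeeping with the isometries of Lemma~\ref{BMO} and the Mellin Parseval formula \eqref{MelP}.
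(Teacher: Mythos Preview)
Your argument is correct, but it follows a genuinely different route from the paper's. The paper does \emph{not} invoke Theorem~\ref{EFBS} at all; instead it proves and uses Lemma~\ref{Titch}, which says that on the Mellin side the cosine transform acts as $\Phi_{\hat{x}_c}(\tfrac{1}{2}+i\tau)=c^{2}(\tau)\,\Phi_{x}(\tfrac{1}{2}-i\tau)$. From this, $\boldsymbol{\mathscr{C}}x=x$ is \emph{equivalent} to the symmetry $\Phi_{x}(\tfrac{1}{2}-i\tau)c(\tau)=\Phi_{x}(\tfrac{1}{2}+i\tau)c(-\tau)$, i.e.\ to $\Phi_{x}(\tfrac{1}{2}+i\tau)=\sqrt{\pi}\,c(\tau)\,\phi(|\tau|)$ for some $\phi\in L^{2}(\mathbb{R}_{+})$, which is precisely the description \eqref{RedToMelcp} of the range of $\boldsymbol{\mathscr{T}}_{c}^{+}$. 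Thus both inclusions come out of one computation, with no need for a broad-sense/true-eigenfunction upgrade and no separate orthogonal-decomposition counting.

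Your approach trades Lemma~\ref{Titch} for machinery already in place: Theorem~\ref{EFBS} gives $\operatorname{Range}(\boldsymbol{\mathscr{T}}_{c}^{\pm})\subseteq\mathcal{C}_{\pm1}$, the Mellin representations from Lemma~\ref{BMO} give $\operatorname{Range}(\boldsymbol{\mathscr{T}}_{c}^{+})\oplus\operatorname{Range}(\boldsymbol{\mathscr{T}}_{c}^{-})=L^{2}(\mathbb{R}_{+})$, and then \eqref{OrDe} forces equality. The cost is the subsequence argument matching the pointwise limit of Theorem~\ref{EFBS} with the $L^{2}$-definition of $\boldsymbol{\mathscr{C}}$ (which you handle correctly, using $|x(t)|\le Ct^{-1/2}$ from Lemma~\ref{PrTrT} to ensure $x\chi_{[0,R]}\in L^{1}\cap L^{2}$). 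The benefit is that you never compute how $\boldsymbol{\mathscr{C}}$ acts on the Mellin side; the paper's benefit is a cleaner, single-step equivalence that also yields the explicit formulas \eqref{phn} as a by-product.
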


\begin{remark}
\label{HiFo}
Since the operators \(\boldsymbol{\mathscr{T}}_{c}^{+}\),\,\(\boldsymbol{\mathscr{T}}_{c}^{-}\),
\(\boldsymbol{\mathscr{T}}_{s}^{+}\),\,\(\boldsymbol{\mathscr{T}}_{s}^{-}\) act isometrically
from \(L^2(\mathbb{R}_{+})\) into \(L^2(\mathbb{R}_{+})\),
the equalities
\begin{subequations}
\label{EigC}
\begin{align}
\label{EigCp}
(\boldsymbol{\mathscr{T}}_{c}^{+})^{\ast}\boldsymbol{\mathscr{T}}_{c}^{+}=
\boldsymbol{\mathscr{I}},\quad\boldsymbol{\mathscr{T}}_{c}^{+}%
(\boldsymbol{\mathscr{T}}_{c}^{+})^{\ast}=\boldsymbol{\mathscr{P}}_{c}^{+},\quad%
\boldsymbol{\mathscr{C}}\boldsymbol{\mathscr{T}}_{c}^{+}=
\phantom{-}\boldsymbol{\mathscr{T}}_{c}^{+};\\
\label{EigCm}
(\boldsymbol{\mathscr{T}}_{c}^{-})^{\ast}\boldsymbol{\mathscr{T}}_{c}^{-}=
\boldsymbol{\mathscr{I}},\quad\boldsymbol{\mathscr{T}}_{c}^{-}%
(\boldsymbol{\mathscr{T}}_{c}^{-})^{\ast}=\boldsymbol{\mathscr{P}}_{c}^{-},\quad%
\boldsymbol{\mathscr{C}}\boldsymbol{\mathscr{T}}_{c}^{-}=-\boldsymbol{\mathscr{T}}_{c}^{-}.
\end{align}
\end{subequations}
and
\begin{subequations}
\label{EigS}
\begin{align}
\label{EigSp}
(\boldsymbol{\mathscr{T}}_{s}^{+})^{\ast}\boldsymbol{\mathscr{T}}_{s}^{+}=
\boldsymbol{\mathscr{I}},\quad\boldsymbol{\mathscr{T}}_{s}^{+}%
(\boldsymbol{\mathscr{T}}_{s}^{+})^{\ast}=\boldsymbol{\mathscr{P}}_{s}^{+},\quad%
\boldsymbol{\mathscr{S}}\boldsymbol{\mathscr{T}}_{s}^{+}=
\phantom{-}\boldsymbol{\mathscr{T}}_{s}^{+};\\
\label{EigSm}
(\boldsymbol{\mathscr{T}}_{s}^{-})^{\ast}\boldsymbol{\mathscr{T}}_{s}^{-}=
\boldsymbol{\mathscr{I}},\quad\boldsymbol{\mathscr{T}}_{s}^{-}%
(\boldsymbol{\mathscr{T}}_{s}^{-})^{\ast}=\boldsymbol{\mathscr{P}}_{s}^{-},\quad%
\boldsymbol{\mathscr{S}}\boldsymbol{\mathscr{T}}_{s}^{-}=-\boldsymbol{\mathscr{T}}_{s}^{-}.
\end{align}
\end{subequations}
hold, where \(\boldsymbol{\mathscr{P}}_{c}^{+},\,\boldsymbol{\mathscr{P}}_{c}^{-}\), \(\boldsymbol{\mathscr{P}}_{s}^{+}\) and \(\boldsymbol{\mathscr{P}}_{s}^{-}\) are
orthogonal projectors from \(L^2(\mathbb{R})_{+}\) onto the eigensubspaces \(\mathcal{C}_{+1}\),
\(\mathcal{C}_{-1}\),\,\(\mathcal{S}_{+1}\) and
\(\mathcal{S}_{-1}\), respectively
and \((\boldsymbol{\mathscr{T}}_{c}^{+})^{\ast}\), \((\boldsymbol{\mathscr{T}}_{c}^{-})^{\ast}\),
 \((\boldsymbol{\mathscr{T}}_{s}^{+})^{\ast}\), \((\boldsymbol{\mathscr{T}}_{s}^{-})^{\ast}\)
are the operators Hermitian--conjugated to the operators
\((\boldsymbol{\mathscr{T}}_{c}^{+}), \,(\boldsymbol{\mathscr{T}}_{c}^{-})\),
\(\boldsymbol{\mathscr{T}}_{s}^{+}), \,(\boldsymbol{\mathscr{T}}_{s}^{-})\) with respect to the
standard scalar product in the Hilbert space \(L^2(\mathbb{R}_{+})\).

In particular, the operators \((\boldsymbol{\mathscr{T}}_{c}^{+})^{\ast}\),
\((\boldsymbol{\mathscr{T}}_{c}^{-})^{\ast}\), \((\boldsymbol{\mathscr{T}}_{s}^{+})^{\ast}\) and
\((\boldsymbol{\mathscr{T}}_{s}^{-})^{\ast}\) are generalized inverses\,%
\footnote{\,In the sense of Moore-Penrose, for example.}
 of the operators
\(\boldsymbol{\mathscr{T}}_{c}^{+}\),
\(\boldsymbol{\mathscr{T}}_{c}^{-}\), \(\boldsymbol{\mathscr{T}}_{s}^{+}\) and
\(\boldsymbol{\mathscr{T}}_{s}^{-}\), respectively.
\end{remark}

\noindent
 It with mentioning that
\begin{subequations}
 \label{CoMIT}
 \begin{alignat}{2}
 \label{CoMITp}
\big((\boldsymbol{\mathscr{T}}_c^{+})^{\ast}\,x\big)(\tau)&=
 \int\limits_{\mathbb{R}_{+}}\,e_{c}^{+}(t,\tau)x(t)\,dt,\ \ &
\big((\boldsymbol{\mathscr{T}}_c^{-})^{\ast}x\big)(\tau)&=
 \int\limits_{\mathbb{R}_{+}}\,e_{c}^{-}(t,\tau)x(t)\,dt,\\
  \label{CoMITm}
\big((\boldsymbol{\mathscr{T}}_s^{+})^{\ast}x\big)(\tau)&=
\int\limits_{\mathbb{R}_{+}}\,e_{s}^{+}(t,\tau)\,x(t)\,dt,\ \ &
 \big((\boldsymbol{\mathscr{T}}_s^{-})^{\ast}x\big)(\tau)&=
 \int\limits_{\mathbb{R}_{+}}\,e_{s}^{-}(t,\tau)\,x(t)\,dt.
 \end{alignat}
 \end{subequations} {\ }\\

\noindent
Theorem \ref{RepEig} is a consequence of the following
\begin{lemma}
\label{Titch}
 Let a function \(x(t)\) belong to \(L^2(\mathbb{R}_{+})\) and \(\hat{x}_c(t)\)
 and \(\hat{x}_s(t)\) be the cosine and sine Fourier transform
 of the function \(x\):
 \begin{subequations}
 \label{FuT}
 \begin{gather}
 \label{FuTc}
\hat{x}_c(t)=\sqrt{\frac{2}{\pi}}\int\limits_{0}^{\infty}x(s)\,\cos
(ts)\,ds,\\[1.0ex]
\label{FuTs}
\hat{x}_s(t)=\sqrt{\frac{2}{\pi}}\int\limits_{0}^{\infty}x(s)\,\sin
(ts)\,ds
 \end{gather}
 \end{subequations}
Let \(\Phi_{x}(\zeta),\,\Phi_{\hat{x}_c}(\zeta)\) and
\(\Phi_{\hat{x}_s}(\zeta)\) be the Melline transforms of the
functions \(x,\,\hat{x}_c\) and \(\hat{x}_s\), respectively.
\textup{(}All three functions \(x,\,\hat{x}_c,\,\hat{x}_s\) belong
to \(L^2(0,\infty)\), so their Melline transforms exist and are
\(L^2\) functions on the vertical line
\(\textup{Re}\,\zeta=\frac{1}{2}\).\textup{)}

Then for \(\zeta:\,\textup{Re}\,\zeta=\frac{1}{2}\), the
equalities
\begin{subequations}
 \label{MMT}
\begin{gather}
\label{MMTc}%
 \Phi_{\hat{x}_c}(\zeta)=\Phi_{x}(1-\zeta)\cdot
2^{\,\zeta-\frac{1}{2}}%
\frac{\Gamma\big(\frac{\zeta}{2}\big)}{\Gamma\big(\frac{1}{2}-\frac{\zeta}{2}\big)}\,,\\[1.0ex]
\label{MMTs}
 \Phi_{\hat{x}_s}(\zeta)=\Phi_{x}(1-\zeta)\cdot
2^{\,\zeta-\frac{1}{2}}%
\frac{\Gamma\big(\frac{1}{2}+\frac{\zeta}{2}\big)}{\Gamma\big(1-\frac{\zeta}{2}\big)}\,.
\end{gather}
\end{subequations}
hold.
\end{lemma}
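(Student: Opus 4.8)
The plan is to establish \eqref{MMTc} (and \eqref{MMTs} in the same way, with \eqref{FuTc}, \eqref{CcI1}, \eqref{Gam1} replaced by \eqref{FuTs}, \eqref{CcI2}, \eqref{Gam2}) by a direct computation that I first carry out on a dense class of functions and then extend to all of \(L^2(\mathbb{R}_+)\) by isometry. Formally: substitute \eqref{FuTc} into \(\Phi_{\hat x_c}(\zeta)=\int_0^\infty\hat x_c(t)\,t^{\zeta-1}\,dt\), interchange the two integrations, and substitute \(u=t\xi\) in the inner one to get
\[
\Phi_{\hat x_c}(\zeta)=\sqrt{\tfrac{2}{\pi}}\int_0^\infty x(\xi)\,\xi^{-\zeta}\,\Bigl(\int_0^\infty u^{\zeta-1}\cos u\,du\Bigr)\,d\xi .
\]
By \eqref{CcI1} the inner integral is \((\cos\tfrac{\pi}{2}\zeta)\,\Gamma(\zeta)\), while \(\int_0^\infty x(\xi)\xi^{-\zeta}\,d\xi=\Phi_x(1-\zeta)\), so \(\Phi_{\hat x_c}(\zeta)=\sqrt{\tfrac{2}{\pi}}\,(\cos\tfrac{\pi}{2}\zeta)\,\Gamma(\zeta)\,\Phi_x(1-\zeta)\); now \eqref{Gam1} turns the prefactor into \(2^{\,\zeta-1/2}\,\Gamma(\tfrac{\zeta}{2})/\Gamma(\tfrac{1}{2}-\tfrac{\zeta}{2})\), which is exactly \eqref{MMTc}.

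To make this rigorous, I would first assume \(x\) smooth and compactly supported in the open half-line \((0,\infty)\). Then \(\hat x_c\) is rapidly decreasing (integrate by parts in \eqref{FuTc}), so \(\Phi_{\hat x_c}(\zeta)=\int_0^\infty\hat x_c(t)t^{\zeta-1}\,dt\) converges absolutely for \(\textup{Re}\,\zeta>0\) and equals \(\lim_{R\to\infty}\int_0^R\hat x_c(t)t^{\zeta-1}\,dt\). For each fixed finite \(R\) the double integral over \([0,R]\times\textup{supp}\,x\) is absolutely convergent (the factor \(t^{\zeta-1}\) is integrable near \(t=0\) since \(\textup{Re}\,\zeta>0\)), so Fubini applies, and after the substitution \(u=t\xi\),
\[
\int_0^R\hat x_c(t)t^{\zeta-1}\,dt=\sqrt{\tfrac{2}{\pi}}\int_0^\infty x(\xi)\,\xi^{-\zeta}\Bigl(\int_0^{R\xi}u^{\zeta-1}\cos u\,du\Bigr)d\xi .
\]
By Lemma \ref{CCI}, as \(R\to\infty\) the inner integral tends to \((\cos\tfrac{\pi}{2}\zeta)\Gamma(\zeta)\) for every \(\xi>0\) and, by \eqref{Ewrta1}, stays bounded by \(C(\delta)e^{\frac{\pi}{2}|\textup{Im}\,\zeta|}\) uniformly in \(R\) and \(\xi\); since \(x(\xi)\xi^{-\zeta}\in L^1(0,\infty)\), the dominated convergence theorem yields \eqref{MMTc} for such \(x\).

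Finally I extend to arbitrary \(x\in L^2(\mathbb{R}_+)\). The map \(x\mapsto\Phi_{\hat x_c}\) is an isometry of \(L^2(\mathbb{R}_+)\) onto \(L^2(\textup{Re}\,\zeta=\tfrac{1}{2})\), being the composition of the isometric cosine transform with the isometric Mellin transform whose \(L^2\)-theory is recalled above, in particular the Parseval equality \eqref{MelP}. On the right-hand side of \eqref{MMTc}, the reflection \(\Phi_x(\zeta)\mapsto\Phi_x(1-\zeta)\) is the change of variable \(\tau\mapsto-\tau\) on the line \(\zeta=\tfrac{1}{2}+i\tau\), hence unitary, and multiplication by \(2^{\,\zeta-1/2}\Gamma(\tfrac{\zeta}{2})/\Gamma(\tfrac{1}{2}-\tfrac{\zeta}{2})=\varkappa_c(1-\zeta)\) (see \eqref{CLCEc}) is multiplication by a function that is \emph{unimodular} on \(\textup{Re}\,\zeta=\tfrac{1}{2}\), since \(\varkappa_c(\zeta)\varkappa_c(1-\zeta)=1\) and \(\varkappa_c(1-\zeta)=\overline{\varkappa_c(\zeta)}\) there. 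Thus both sides of \eqref{MMTc}, as bounded linear maps of \(x\) into \(L^2(\textup{Re}\,\zeta=\tfrac{1}{2})\), agree on the dense set of smooth compactly supported functions, hence agree for every \(x\in L^2(\mathbb{R}_+)\), i.e. for almost every \(\zeta\) with \(\textup{Re}\,\zeta=\tfrac{1}{2}\). The sine case \eqref{MMTs} is identical, with \(\varkappa_s\) in place of \(\varkappa_c\).

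The main obstacle is the interchange of the limit \(R\to\infty\) with the \(\xi\)-integration: the integral \(\int_0^\infty u^{\zeta-1}\cos u\,du\) converges only \emph{conditionally}, so Fubini cannot be invoked directly on the whole half-line, and the uniform-in-\(R\) bound \eqref{Ewrta1} supplied by Lemma \ref{CCI} is precisely what powers the dominated convergence argument. A secondary point requiring care is the verification that the Gamma-quotient multiplier in \eqref{MMTc}, \eqref{MMTs} is bounded — indeed unimodular — on the critical line, which is what licenses the density argument in the last step.
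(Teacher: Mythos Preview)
Your proof is correct and follows essentially the same route as the paper: reduce to a dense class, truncate the Mellin integral at \(R\), apply Fubini on the finite rectangle, substitute \(u=t\xi\), and then pass to the limit \(R\to\infty\) via dominated convergence using the uniform bound from Lemma~\ref{CCI}, finishing with the identity \eqref{Gam1}. The only cosmetic differences are your choice of dense class (smooth compactly supported rather than the paper's ``\(x,\hat x_c,\hat x_s\in L^1\cap L^2\) and continuous'') and your explicit verification that the Gamma-quotient multiplier is unimodular on the critical line, which the paper subsumes under the blanket remark that all three transforms act continuously \(L^2\to L^2\).
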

\begin{proof}
It is enough to prove the equalities \eqref{MMT} assuming that the functions \(x(t),
\hat{x}_c(t), \hat{x}_s(t)\) are continuous and belong to \(L^{2}(\mathbb{R}_{+})\cap{}L^{1}(\mathbb{R}_{+})\):
the set of such functions \(x\) is dense in \(L^2(\mathbb{R})\) and all three transforms, cosine,  sine  and Melline transforms, act continuously from \(L^2\) to \(L^2\).
Under these extra assumptions on the functions \(x(t),\hat{x}_c(t), \hat{x}_s(t)\),
the Melline transforms \(\Phi_{x}(\zeta),\,\Phi_{\hat{x}_c}(\zeta)\), \(\Phi_{\hat{x}_c}(\zeta)\) are defined everywhere on the vertical line \(\textup{Re}\,\zeta=\frac{1}{2}\) and are continuous functions there. For such \(x\), the equalities \eqref{MMT} will be established for every
\(\zeta\! :\,\textup{Re}\,\zeta=\tfrac{1}{2}\).

We fix \(\zeta\! :\,\textup{Re}\,\zeta=\tfrac{1}{2}\). The Melline transform \(\Phi_{\hat{x}_c}(\zeta)\) is:
\[\Phi_{\hat{x}_c}(\zeta)=%
 \lim_{R\to\infty}\int\limits_{0}^{R}\hat{x}_c(t)t^{\zeta-1}dt.\]
 Substituting the expression \eqref{FuTc} for \(\hat{x}_c(t)\) into the last formula, we obtain:
 \begin{equation}
\label{T1}%
\Phi_{\hat{x}_c}(\zeta)=%
 \lim_{R\to\infty}\int\limits_{0}^{R}
\bigg(
\sqrt{\frac{2}{\pi}}\int\limits_{0}^{\infty}x(s)\,\cos(ts)\,ds\bigg)
t^{\zeta-1}\,dt.
\end{equation}
For fixed finite \(R\), we change the order of integration:
\begin{equation*}
\int\limits_{0}^{R} \bigg(
\int\limits_{0}^{\infty}x(s)\,\cos(ts)\,ds\bigg)
t^{\zeta-1}\,dt=
\int\limits_{0}^{\infty}x(s)\bigg(%
\int\limits_{0}^{R}\cos(ts)\,t^{\zeta-1}\,dt
\bigg)\,ds\,.
\end{equation*}%
 The change of order of integration is
justified by Fubini's theorem. Changing the variable \(ts=\tau\), we
get
\[\int\limits_{0}^{R}\cos(ts)\,t^{\zeta-1}\,dt=
s^{-\zeta}\int\limits_{0}^{Rs}\cos(\tau)\,\tau^{\zeta-1}\,d\tau\,.\]
Thus%
\begin{multline}%
\label{T3}%
 \int\limits_{0}^{R} \bigg(
\sqrt{\frac{2}{\pi}}\int\limits_{0}^{\infty}x(s)\,\cos(ts)\,ds\bigg)
t^{\zeta-1}\,dt=\\%
=\int\limits_{0}^{\infty}x(s)s^{-\zeta}\bigg(\sqrt{\frac{2}{\pi}}%
\int\limits_{0}^{Rs}\cos(\tau)\,\tau^{\zeta-1}\,d\tau\bigg)\,ds\,.
\end{multline}%
According to Lemma \ref{CCI}, for every \(s>0\),
\begin{equation*}%
\lim_{R\to\infty}\int\limits_{0}^{Rs}\cos(\tau)\,\tau^{\zeta-1}\,d\tau=
\Big(\cos\,\frac{\pi}{2}\zeta\Big)\,\Gamma(\zeta)\,,
\end{equation*}%
 The value \(\displaystyle\int\limits_{0}^{\rho}(\cos\tau)\tau^{\zeta-1}\,d\tau\),
 considered as a function of \(\rho\), vanishes at \(\rho=0\),
 is a continuous function of \(\rho\), and has a finite limit as
 \(\rho\to\infty\). Therefore there exist a finite
 \(M(\zeta)<\infty\) such that
 the estimate holds:
 \(\Big|\int\limits_{0}^{\rho}(\cos\tau)\tau^{\zeta-1}\,d\tau\Big|\leq{}M(\zeta)\),
 where the value \(M(\zeta)\) does not depend on~\(\rho\). In other
 words,
 \begin{equation*}
 \Bigg|\int\limits_{0}^{Rs}\cos(\tau)\,%
 \tau^{\zeta-1}\,d\tau\Bigg|\leq{}M(\zeta)<
 \infty\quad \forall\, s,R:\,0\leq{}s<\infty,\,0\leq{}R<\infty\,.
 \end{equation*}
 By the Lebesgue theorem on dominating convergence,
 \begin{multline}%
 \label{T4}
 \lim_{R\to\infty}\int\limits_{0}^{\infty}x(s)s^{-\zeta}\bigg(\sqrt{\frac{2}{\pi}}%
\int\limits_{0}^{Rs}\cos(\tau)\,\tau^{\zeta-1}\,d\tau\bigg)\,ds=\\
=\int\limits_{0}^{\infty}x(s)s^{-\zeta}\bigg(\sqrt{\frac{2}{\pi}}%
\int\limits_{0}^{\infty}\cos(\tau)\,\tau^{\zeta-1}\,d\tau\bigg)\,ds\,.
\end{multline}%
Taking into account the equalities \eqref{T1}, \eqref{T4} and using \eqref{CcI1} and \eqref{Gam1}, we reduce the last equality to the form
\[\Phi_{\hat{x}_c}(\zeta)=
\int\limits_{0}^{\infty}x(s)\,s^{-\zeta}\,ds\,\cdot\,
2^{\,\zeta-\frac{1}{2}}%
\frac{\Gamma\big(\frac{\zeta}{2}\big)}{\Gamma\big(\frac{1}{2}-\frac{\zeta}{2}\big)}\,.\]
To obtain \eqref{MMTc} from the previous equality, we need only consider that
\[\int\limits_{0}^{\infty}x(s)\,s^{-\zeta}\,ds=\Phi_{x}(1-\zeta)\,.\]
The equality  \eqref{MMTs} can be proved analogously.
\end{proof}
\begin{remark}
\label{phase}
The equalities \eqref{MMT} can be presented in the form
\begin{subequations}
\label{phn}
\begin{align}
\label{phnc}
 \Phi_{\hat{x}_c}\big(\tfrac{1}{2}+i\tau\big)&=\Phi_{x}\big(\tfrac{1}{2}-i\tau\big)\cdot
c^{\,2}(\tau),\\
\label{phns}
 \Phi_{\hat{x}_s}\big(\tfrac{1}{2}+i\tau\big)&=\Phi_{x}\big(\tfrac{1}{2}-i\tau\big)\cdot
s^{\,2}(\tau),
\end{align}
\end{subequations}
where \(c(\tau)\) and \(s(\tau)\) were introduced in \eqref{etau}.
\end{remark}
\hspace{1.0ex}\begin{proof}[Proof of \textup{Theorem \ref{RepEig}}]
Let \(x_c(t)\) be defined by \eqref{FuTc}.
The equality \(\boldsymbol{\mathscr{C}}x=x\), i.e. the equality
\(x_c(t)=x(t)\) for functions \(x_c(t),\,x(t)\), is equivalent to the equality
\[\Phi_{\hat{x}_{c}}\big(\tfrac{1}{2}+i\tau\big)=\Phi_{x}\big(\tfrac{1}{2}+i\tau\big)\]
for their Melline transforms. According to Lemma \ref{Titch}, \eqref{phnc},
the last equality can be reduced\,\footnote{\,Remember that \(c^{-1}(\tau)=c(-\tau).\)} %
to the form
\begin{subequations}
\label{Equc}
\begin{align}
\label{Equcp}
\Phi_{x}\big(\tfrac{1}{2}-i\tau\big)\cdot
c(\tau)&=\phantom{-}\Phi_{x}\big(\tfrac{1}{2}+i\tau\big)\cdot
c(-\tau),\quad -\infty<\tau<\infty.\\
\intertext{
Analogously, the equalities  \(\boldsymbol{\mathscr{C}}x=-x\),  \(\boldsymbol{\mathscr{S}}x=x\)
and  \(\boldsymbol{\mathscr{S}}x=-x\) for the functions \(x(t)\) are equivalent to the
equalities}
\label{Equcm}
\Phi_{x}\big(\tfrac{1}{2}-i\tau\big)\cdot
c(\tau)&=-\Phi_{x}\big(\tfrac{1}{2}+i\tau\big)\cdot
c(-\tau),\quad -\infty<\tau<\infty,
\end{align}
\end{subequations}
and
\begin{subequations}
\label{Equs}
\begin{align}
\label{Equsp}
\Phi_{x}\big(\tfrac{1}{2}-i\tau\big)\cdot
s(\tau)&=\phantom{-}\Phi_{x}\big(\tfrac{1}{2}+i\tau\big)\cdot
s(-\tau),\quad -\infty<\tau<\infty.\\
\label{Equsm}
\Phi_{x}\big(\tfrac{1}{2}-i\tau\big)\cdot
s(\tau)&=-\Phi_{x}\big(\tfrac{1}{2}+i\tau\big)\cdot
s(-\tau),\quad -\infty<\tau<\infty,
\end{align}
\end{subequations}
Thus each of the equalities
\(\boldsymbol{\mathscr{C}}x=x\), \(\boldsymbol{\mathscr{C}}x=-x\),
\(\boldsymbol{\mathscr{S}}x=x\), \(\boldsymbol{\mathscr{S}}x=-x\)
for the function \(x(t),\, 0<t<\infty\), is equivalent to the symmetry condition
 for its Melline transform \(\Phi_{x}\big(\tfrac{1}{2}+i\tau\big),\,-\infty<\tau<\infty\).
These symmetry conditions, which appear as conditions \eqref{Equc}, \eqref{Equs}, can be presented in the form
\begin{alignat*}{2}
\Phi_{x}\big(\tfrac{1}{2}+i\tau\big)&=& &\sqrt{\pi}\,c(\tau)\,\phi(|\tau|),
\quad -\infty<\tau<\infty,\\
\Phi_{x}\big(\tfrac{1}{2}+i\tau\big)&=&\,\tfrac{1}{i}\,\textup{sign}\,(\tau)\,&\sqrt{\pi}\,c(\tau)\,\phi(|\tau|),
 \quad -\infty<\tau<\infty,
\end{alignat*}
and
\begin{alignat*}{2}
\Phi_{x}\big(\tfrac{1}{2}+i\tau\big)&=& &\sqrt{\pi}\,s(\tau)\,\phi(|\tau|),
\quad -\infty<\tau<\infty,\\
\Phi_{x}\big(\tfrac{1}{2}+i\tau\big)&=&\,\tfrac{1}{i}\,\textup{sign}\,(\tau)\,&\sqrt{\pi}\,s(\tau)\,\phi(|\tau|),
 \quad -\infty<\tau<\infty,
\end{alignat*}
where \(\phi(\tau)\) is function defined for \(0<\tau<\infty\).
Comparing these expressions for the function \(\Phi_{x}\big(\tfrac{1}{2}+i\tau\big)\)
with the expressions \eqref{ec}, \eqref{es} for the eigenfunctions
\(e_{c}^{+}(t,\tau),\,e_{c}^{-}(t,\tau),\,e_{s}^{+}(t,\tau),\,e_{s}^{-}(t,\tau)\), we see
that in each of the four cases, the inversion formula
\[x(t)=\tfrac{1}{2\pi}\int\limits_{-\infty}^{\infty}%
t^{-\tfrac{1}{2}+i\tau}\Phi_{x}\big(\tfrac{1}{2}+i\tau\big)\,d\tau
\]
for the Melline transform can be presented in terms of the function \(\phi(\tau)\) as
\begin{subequations}
\label{REF}
\begin{align}
\label{REFc}
x(t)=\int\limits_{0}^{\infty}e_{c}^{+}(t,\tau)\,\phi(\tau)\,d\tau,\quad
x(t)=\int\limits_{0}^{\infty}e_{c}^{-}(t,\tau)\,\phi(\tau)\,d\tau,\\
\label{REFs}
x(t)=\int\limits_{0}^{\infty}e_{s}^{+}(t,\tau)\,\phi(\tau)\,d\tau,\quad
x(t)=\int\limits_{0}^{\infty}e_{s}^{-}(t,\tau)\,\phi(\tau)\,d\tau,
\end{align}
\end{subequations}
respectively.
Now the symmetries \eqref{Equc}, \eqref{Equs} of the function \(\Phi_{x}\big(\tfrac{1}{2}+i\tau\big)\)  are hidden in the structure
of functions \(e_{c}^{+}\), \(e_{c}^{-}\),
\(e_{s}^{+}\), \(e_{-}^{-}\).

Thus, the equalities  \(\boldsymbol{\mathscr{C}}x=x\), \(\boldsymbol{\mathscr{C}}x=-x\) and  \(\boldsymbol{\mathscr{S}}x=x\),
\(\boldsymbol{\mathscr{S}}x=-x\) for the functions \(x\) are equivalent to
representability of \(x\) in one of the four forms \eqref{REF},
i.e. in the form \(x=\boldsymbol{\mathscr{T}}_{c}^{+}\phi\),\,\(x=\boldsymbol{\mathscr{T}}_{c}^{-}\phi\),
\(x=\boldsymbol{\mathscr{T}}_{s}^{+}\phi\) and \(x=\boldsymbol{\mathscr{T}}_{s}^{-}\phi\),
respectively (with \(\phi\in{}L^2(\mathbb{R}_{+})\)).
\end{proof}

\noindent
\small
\textbf{Acknowledgements} I thank Armin Rahn for his careful reading of the manuscript
and his help in improving the English in this paper.
\normalsize


\end{document}